\newtheorem{theorem}{Theorem}
\newtheorem{proposition}[theorem]{Proposition}
\newtheorem{definition}[theorem]{Definition}
\newtheorem{remark}[theorem]{Remark}
\newtheorem{lemma}[theorem]{Lemma}
\newtheorem{hypothesis}[theorem]{Hypothesis}
\newcommand{\R}{\mathbb{R}^d}
\newcommand{\rz}{\mathbb{R}}
\newcommand{\supp}{\hbox{supp}}
\newcommand{\E}{{\mathcal{E}_{\phi}}}
\newcommand{\M}{{\mathcal{L}}}
\newcommand{\HM}{\mathcal{H}}
\newcommand{\per}{{\rm Per}}
\newcommand{\rzz}{\mathbb{R}}
\newcommand{\uo}{u}
\newcommand{\X}{X_{\phi,\omega}}
\newcommand{\iphil}{I_{\ell}}
\newcommand{\tphil}{\mathbb{T}_{\ell}}
\definecolor{darkred}{rgb}{0.9,0.1,0.1}
\numberwithin{equation}{section}
\numberwithin{theorem}{section}
\begin{document}
\thispagestyle{empty}
\title[Symmetry of minimizers]{Symmetry of constrained minimizers of the Cahn-Hilliard energy on the torus}
\author{Michael Gelantalis}
\address{Michael Gelantalis, University of Tennessee}
\email{mgelanta@tennessee.edu}
\author{Alfred Wagner}
\address{Alfred Wagner, RWTH Aachen University}
\email{wagner@instmath.rwth-aachen.de}
\author{Maria G. Westdickenberg}
\address{Maria G. Westdickenberg, RWTH Aachen University}
\email{maria@math1.rwth-aachen.de}
\pagestyle{myheadings}
\markright{\sc }
\date{\today}
\begin{abstract}
We establish sufficient conditions for a function on the torus to be equal to its Steiner symmetrization and apply the result to volume-constrained minimizers of the Cahn-Hilliard energy. We also show how two-point rearrangements can be used to establish symmetry for the Cahn-Hilliard model.
In two dimensions, the Bonnesen inequality can then be applied to quantitatively estimate the sphericity
of superlevel sets. 
\end{abstract}
\maketitle
\large
\date{}
\section{Introduction}\label{intro}
We are interested in symmetry of constrained minimizers of a Cahn-Hilliard  energy on the
torus. Steiner symmetrization is a natural tool in such a setting, and it is easy to use Steiner
symmetrization to show that there exist minimizers with
the symmetries of the torus~\cite{GWW}. In this paper, we show that in fact any  constrained
minimizer is (up to a shift)
\emph{equal to its Steiner symmetrization}. To do so, we formulate general sufficient conditions
for a function on the torus to be equal to
its Steiner symmetrization. Applying the result to the Cahn-Hilliard model, we obtain in particular that
the superlevel sets of minimizers are simply connected.
In two dimensions, we use this together with the Bonnesen inequality to derive a new bound
on the sphericity of minimizers (cf.~Proposition~\ref{Asym.stronger}), which rules out
phenomena such as 'tentacles'.

An even simpler rearrangement is the two-point rearrangement or polarization of a function. In general two-
point rearrangements give weaker results than symmetrization. For the Cahn-Hilliard problem, however, we
will obtain from two-point rearrangements that a minimizer is equal to its reflection with respect to some
hyperplane and from here deduce strict monotonicity properties.

A fine analysis by Cianchi and Fusco \cite{CF} gives sufficient conditions under which equality of the
Dirichlet integrals (cf.~\eqref{inteq}, below) for nonnegative functions in $W_0^{1,1}$ on suitable
domains $\Omega\subset\R$ implies that the function equals its Steiner symmetrization; see
\cite[Theorem 2.2 and Section 1]{CF}. Their main assumption, which we will also require, is given by
\eqref{keyass1} below. When one replaces the condition of nonnegativity and zero boundary condition by a
periodic boundary condition, however, one encounters an additional degeneracy; for instance the function
whose graph is depicted
in Figure~\ref{bumps} is not equal to its Steiner symmetrization even though it satisfies
\eqref{keyass1} and \eqref{inteq}. We will show that \eqref{mM} below suffices to rule out such
counterexamples.

Throughout the paper we will use the notation $\iphil:=\left[-\ell, \ell\right]$
where the endpoints are identified and use $\tphil$ and $\tphil'$ to represent the $d$- and
$d-1$-dimensional tori
\begin{eqnarray*}
\tphil:=\underbrace{\iphil\times\hdots\times\iphil}_{d\: -\: times}\qquad\hbox{and}\qquad
\tphil':=\underbrace{\iphil\times\hdots\times\iphil}_{d-1\:-\: times}.
\end{eqnarray*}
We will often represent a point $x\in\tphil$ by $x=(x',y)$ where
$x' =(x_1,\hdots x_{d-1})\in\tphil'$ and  $y\in\iphil$.

The space $C^1(\tphil)$ will denote the space of continuous functions that are continuously
differentiable and $2\ell$ - periodic in each variable. For $x'\in\tphil'$ we define
\begin{eqnarray*}
m(x'):=\min\{u(x',y)\::\:y\in \iphil\}\qquad\hbox{and}\qquad M(x'):=\max\{u(x',y)\::\:y\in \iphil\}.
\end{eqnarray*}
We will establish our main  result for Steiner symmetrization under the following hypothesis
(which we will later show to hold true in the Cahn-Hilliard model).
\begin{hypothesis}\label{hyp:1}
For all $x'\in\tphil'$ there holds
\begin{align}
m(x')<M(x')\label{mM}
\end{align}
and
\begin{align}
\M^d\left(\left\{(x',y)\in\tphil\colon \partial_y u(x',y)=0\;\text{and}\; m(x')<u(x',y)
<M(x')\right\}\right)=0.\label{keyass1}
\end{align}
\end{hypothesis}
As a consequence of \eqref{keyass1}, we observe that
\begin{align}
\M^{1}\left(\left\{y\in\iphil:\partial_{y}u(x',y)=0,\: m(x')<u(x',y)<M(x')\right\}\right)=0\label{keyass3}
\end{align}
for a.e. $x'\in\tphil'$.
According to Lemma \ref{equimeas} below, the same holds true for the Steiner symmetrization.
We will use these facts later in making use of the Coarea Formula.
\begin{figure}
\begin{center}
\scalebox{.8}{\includegraphics{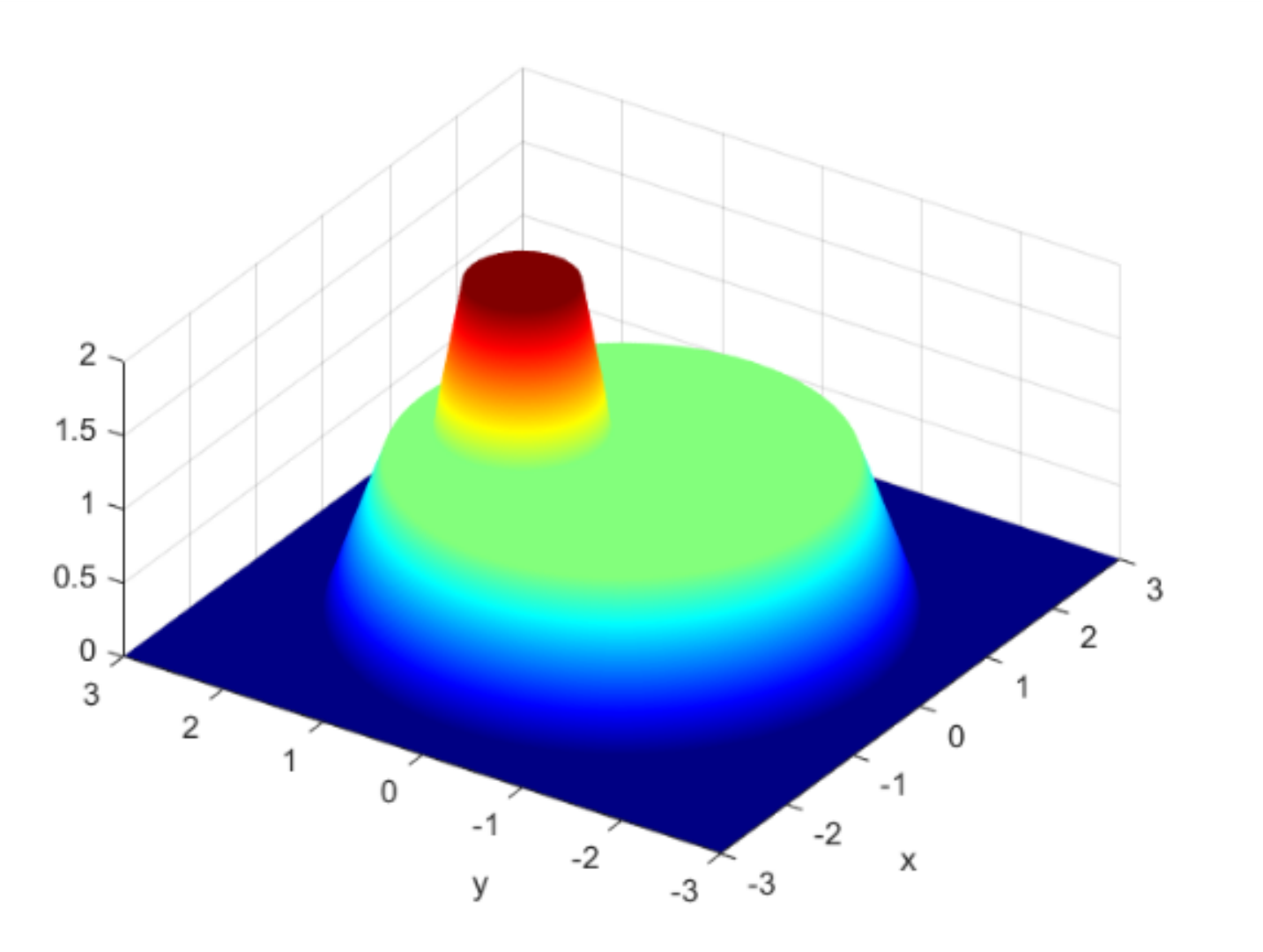}}
\end{center}
\caption{This function fulfills \eqref{inteq} but violates \eqref{keyass1}. Sliding the ```top layer'' of the layer-
cake around on the upper plateau does not change its Dirichlet energy and the function is not equal to its
Steiner symmetrization.}\label{cake}
\end{figure}
\begin{figure}
\begin{center}
\scalebox{.8}{\includegraphics{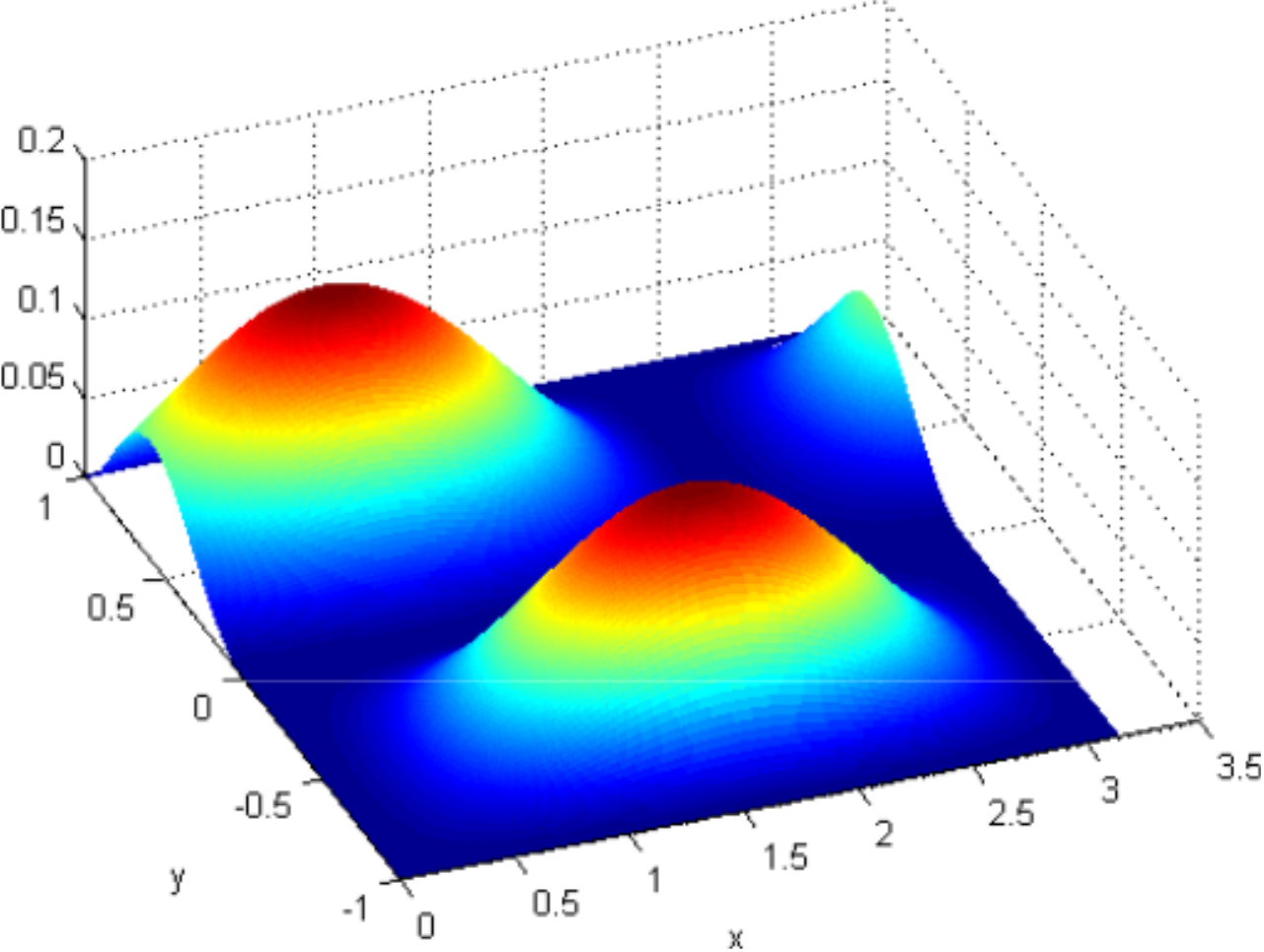}}
\end{center}
\caption{Steiner symmetrizing with respect to $x=\pi/2$ aligns the bumps without changing the Dirichlet
energy. This is possible because the function is constant on (merely) the line $y=0$. Such counterexamples
are ruled out by \eqref{mM}.}\label{bumps}
\end{figure}

Our main result for the Steiner symmetrization is the following.
\begin{theorem}\label{steinerthm}
Let $u\in C^{1}(\tphil)$ and assume Hypothesis~\ref{hyp:1} holds. If
\begin{align}\label{inteq}
\int\limits_{\tphil}\vert\nabla u\vert^2\:dx=\int\limits_{\tphil}\vert\nabla u^{s}\vert^2\:dx,
\end{align}
where $u^s$ represents the Steiner symmetrization of $u$ with respect to $\{y=0\}$, then there
exists $\beta\in \iphil$ such that $u=u^s(\cdot,y-\beta)$.
\end{theorem}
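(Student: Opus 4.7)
The plan is to split the hypothesis \eqref{inteq} into its $y$- and $x'$-components, use a slicewise 1D analysis to show that $u(x',\cdot)$ is a translate of $u^s(x',\cdot)$ for almost every $x'\in\tphil'$, and then force the slice-translation $\beta(x')$ to be constant via a cross-term cancellation coming from the symmetry of $u^s$ about $\{y=0\}$.

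First I would observe that Steiner symmetrization in $y$ separately decreases both $\int|\partial_y u|^2\,dx$ (by the 1D P\'olya-Szeg\H o inequality applied slicewise on $\iphil$) and $\int|\nabla_{x'}u|^2\,dx$ (by the standard tangential P\'olya-Szeg\H o argument). Since \eqref{inteq} preserves the sum, both inequalities must be equalities, and by Fubini
\begin{equation*}
\int_{\iphil}|\partial_y u(x',y)|^2\,dy=\int_{\iphil}|\partial_y u^s(x',y)|^2\,dy\quad\text{for a.e.\ }x'\in\tphil'.
\end{equation*}
For such $x'$, the Coarea Formula (legitimate by \eqref{keyass3} and its analogue for $u^s$ from Lemma~\ref{equimeas}) combined with Cauchy-Schwarz on the level sets $\{y:u(x',y)=t\}$ gives
\begin{equation*}
\int_{\iphil}|\partial_y u|^2\,dy\ge\int_{m(x')}^{M(x')}\frac{(2k(t))^2}{|F'(t)|}\,dt\ge\int_{\iphil}|\partial_y u^s|^2\,dy,
\end{equation*}
where $F$ is the common distribution function and $2k(t)$ the cardinality of the level set $\{y:u(x',y)=t\}$. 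Equality throughout forces $k(t)\equiv 1$ a.e., so each superlevel set $\{u(x',\cdot)>t\}$ is a single arc and $u(x',\cdot)$ is unimodal on $\iphil$. Combined with the equimeasurability of slices, this yields
\begin{equation*}
u(x',y)=u^s(x',y-\beta(x'))
\end{equation*}
for a measurable $\beta:\tphil'\to\iphil$, e.g.\ the location of the slice maximum, which is essentially unique by \eqref{mM}.

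Finally I would apply the chain rule to the displayed identity to obtain
\begin{equation*}
\nabla_{x'}u(x',y)=(\nabla_{x'}u^s)(x',y-\beta(x'))-(\partial_y u^s)(x',y-\beta(x'))\,\nabla_{x'}\beta(x').
\end{equation*}
Squaring and integrating in $y$ (with the substitution $z=y-\beta(x')$), the cross term $\int_{\iphil}(\nabla_{x'}u^s)(x',z)(\partial_y u^s)(x',z)\,dz$ vanishes because $u^s(x',\cdot)$ is symmetric about $z=0$, making $\nabla_{x'}u^s$ even and $\partial_y u^s$ odd in $z$. The surviving identity
\begin{equation*}
\int_{\iphil}|\nabla_{x'}u|^2\,dy=\int_{\iphil}|\nabla_{x'}u^s|^2\,dy+|\nabla_{x'}\beta(x')|^2\int_{\iphil}|\partial_y u^s(x',z)|^2\,dz,
\end{equation*}
integrated over $\tphil'$ and subtracted from the tangential equality, yields
\begin{equation*}
\int_{\tphil'}|\nabla_{x'}\beta|^2\left(\int_{\iphil}|\partial_y u^s|^2\,dz\right)dx'=0.
\end{equation*}
By \eqref{mM} the inner integral is strictly positive for every $x'$, so $\nabla_{x'}\beta\equiv 0$ and $\beta$ is constant on $\tphil'$. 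The main obstacle will be establishing enough regularity of $\beta$ (say $\beta\in W^{1,2}(\tphil')$) to justify the chain rule step above; this requires combining the $C^1$ regularity of $u$ with the strict non-degeneracy of Hypothesis~\ref{hyp:1}, which prevents $\beta$ from jumping across plateaus. A secondary technical point is verifying the separate tangential P\'olya-Szeg\H o inequality invoked at the outset.
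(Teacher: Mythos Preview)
Your overall strategy is sound and, modulo the regularity of $\beta$ that you correctly flag, gives a complete proof; it differs from the paper's argument in two places worth recording. First, you split \eqref{inteq} into separate equalities for $\int|\partial_y u|^2$ and $\int|\nabla_{x'}u|^2$; the tangential P\'olya--Szeg\H{o} inequality you invoke is indeed true and follows from the $L^2$-nonexpansiveness of one-dimensional symmetric decreasing rearrangement applied to difference quotients in $x'$. The paper instead keeps the full gradient together and extracts, via the coarea formula, pointwise equality conditions in three simultaneous inequalities (Cauchy--Schwarz for the tangential derivatives, the isoperimetric bound $\HM^0\{y:u(x',y)=t\}\ge 2$, and Cauchy--Schwarz for $|\partial_y u|$). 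Second, for constancy of the shift the paper proceeds pointwise: equality in the tangential Cauchy--Schwarz forces $\partial_i u$ to be constant on each level set, and differentiating the implicit relation $u(x',y_j(x',t_0))=t_0$ then yields $\partial_i(y_1+y_2)=0$. Your parity argument (the cross term $\int(\partial_i u^s)(\partial_y u^s)\,dz$ vanishes by even/odd symmetry) is more global and arguably cleaner once the chain rule is justified; the paper's Step~5---showing $b\in W^{1,1}$ via the Implicit Function Theorem and an integral representation---is exactly the regularity input you need, and it adapts to your $\beta$.

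One step needs tightening. Unimodality together with equimeasurability of $u(x',\cdot)$ and $u^s(x',\cdot)$ does \emph{not} by itself yield $u(x',y)=u^s(x',y-\beta(x'))$: a skewed bump is unimodal and equimeasurable with its symmetrization without being a translate of it. What you need in addition is equality in your first (Cauchy--Schwarz) inequality, which forces $|\partial_y u(x',\cdot)|$ to be constant on each level set $\{y:u(x',y)=t\}$; only then is the midpoint $(y_1+y_2)/2$ independent of $t$ and the bump symmetric. This information is contained in your ``equality throughout,'' but the deduction you write (``unimodal $+$ equimeasurable $\Rightarrow$ translate'') is false as stated and should be corrected.
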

We now explain the Cahn-Hilliard model of interest. We consider the energy
\begin{eqnarray*}
\E=\int\limits_{\tphil}\frac{\phi}{2}\vert Du\vert^2+\frac{1}{\phi}G(u)\:dx
\end{eqnarray*}
where $G\in C^2(\mathbb{R})$ is a nonnegative double-well potential with zeros at $\pm 1$.
The canonical potential is
\begin{align*}
  G(s)=\frac{1}{4}(1-s^2)^2.
\end{align*}
We will denote the mean of a function by
\begin{eqnarray*}
\overline{u}:=\frac{1}{\vert \tphil\vert}\int\limits_{\tphil}u(x)\:dx
\end{eqnarray*}
and, for a smooth, monotone function $\zeta:\mathbb{R}\to [0,1]$ such that
\begin{align*}
\zeta(s)&=
\begin{cases}
1\quad&\text{for}\quad s\geq 1-\phi^{1/3}\\
0\quad&\text{for}\quad s\leq 1-2\phi^{1/3},
\end{cases}
\end{align*}
we will refer to
\begin{align*}
  \int\limits_{\tphil}\zeta(u(x))\:dx
\end{align*}
as the ``volume'' of the function $u$. We remark for future reference that the minimizers studied
in \cite{GWW} satisfy
\begin{eqnarray}\label{supp}
\supp(\zeta'(u))\ne\tphil.
\end{eqnarray}
We will always assume that \eqref{supp} holds.
The energy $\E$ is studied
on the set of $2\ell$-periodic functions with fixed mean and volume:
\begin{eqnarray*}
\X:=\left\{u\in W^{1,2}(\tphil)\::\:\overline{u}=-1+\phi,\:
\int\limits_{\tphil}\zeta(u)\:dx=\omega\right\}
\end{eqnarray*}
in the regime
 \begin{align}
\ell:=\frac{\phi L}{2},\qquad \phi=\xi\,L^{-d/(d+1)}.\label{regime}
 \end{align}
Minimizers of the energy $\E$ over $\X$ are known to exist and to satisfy quantitative
estimates that for $\phi$ small measure their closeness to certain sharp-interface ``droplet''
functions that are equal to $+1$ in a sphere and $-1$ on the complement.

Our main result for constrained minimizers of the Cahn-Hilliard energy is the following.
\begin{theorem}\label{t:ssym}
Let $u$ minimize $\E$ over $\X$ and assume that \eqref{supp} holds. Then there exists $
\beta^\star=(\beta_1,\ldots,\beta_d)\in\tphil$ such that $u$ is equal to its iterated Steiner
symmetrization with respect to $x_1=\beta_1$, $x_2=\beta_2,\ldots,\, x_d=\beta_d$.
\end{theorem}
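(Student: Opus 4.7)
The plan is to apply Theorem~\ref{steinerthm} once in each of the $d$ coordinate directions and combine the resulting one-directional symmetries. The argument breaks into four steps: regularity, equality of Dirichlet integrals, verification of Hypothesis~\ref{hyp:1}, and iteration.

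For regularity, a constrained minimizer $u\in\X$ satisfies the Euler--Lagrange equation
\[
-\phi\,\Delta u + \phi^{-1} G'(u) = \mu + \nu\,\zeta'(u)
\]
for Lagrange multipliers $\mu,\nu\in\rz$ associated with the mean and volume constraints; standard elliptic bootstrapping then yields $u\in C^1(\tphil)$ (indeed $C^\infty$ or real-analytic when $G$ and $\zeta$ are sufficiently smooth), which is the baseline regularity required by Theorem~\ref{steinerthm}. For the equality of Dirichlet integrals with respect to $\{y=0\}$, Steiner symmetrization is equimeasurable, so $\int_{\tphil} G(u)\,dx=\int_{\tphil} G(u^s)\,dx$, $\int_{\tphil}\zeta(u)\,dx=\int_{\tphil}\zeta(u^s)\,dx$, and $\overline u=\overline{u^s}$. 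Hence $u^s\in\X$; the P\'olya--Szeg\H{o} inequality gives $\E(u^s)\le\E(u)$; and minimality of $u$ forces equality. Since the potential and volume contributions coincide, the Dirichlet parts must as well, which is precisely \eqref{inteq}.

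Next, Hypothesis~\ref{hyp:1} must be checked for $u$. Condition \eqref{keyass1} on the non-extremal critical set can be handled via real-analyticity of $u$ (the Euler--Lagrange equation is semilinear with analytic nonlinearity for the canonical potential): the zero set $\{\partial_y u=0\}$ is then a real-analytic subvariety of $\tphil$, and excluding the fibers on which $u$ is constant in $y$ leaves a stratum of Hausdorff dimension at most $d-1$, hence of Lebesgue measure zero. The more delicate condition is \eqref{mM}. If $m(x_0')=M(x_0')$ for some $x_0'\in\tphil'$, then $u(x_0',\cdot)\equiv c$ and $\partial_y u$ vanishes identically on the fiber $\{x_0'\}\times\iphil$; by analytic continuation applied to the linear second-order equation satisfied by $\partial_y u$ this forces $\partial_y u\equiv 0$ throughout $\tphil$, so $u=u(x')$. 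Iterating the argument in the remaining directions eventually forces $u$ to be constant, in contradiction to \eqref{supp} together with the volume and mean constraints built into $\X$.

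Finally, Theorem~\ref{steinerthm} produces $\beta_d\in\iphil$ such that $u$ is Steiner-symmetric about $\{y=\beta_d\}$. The same argument applied in each of the remaining $d-1$ coordinate directions (via the obvious analogues of Theorem~\ref{steinerthm} obtained by relabeling coordinates) yields shifts $\beta_1,\ldots,\beta_{d-1}$ such that $u$ is Steiner-symmetric about $\{x_i=\beta_i\}$ for every $i$. A function on $\tphil$ that is Steiner-symmetric in each coordinate direction about a fixed center equals its iterated Steiner symmetrization with respect to those directions, yielding the claim with $\beta^\star=(\beta_1,\ldots,\beta_d)$. The main obstacle is the verification of \eqref{mM}: ruling out that a minimizer is constant along any coordinate fiber. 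A successful proof will likely combine the elliptic analyticity of $u$ with the strict monotonicity properties that the introduction advertises as consequences of two-point rearrangement.
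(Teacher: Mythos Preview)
Your overall architecture (regularity, equality of Dirichlet integrals, verification of Hypothesis~\ref{hyp:1}, iteration) matches the paper's, but the verification of Hypothesis~\ref{hyp:1} contains a genuine gap and differs substantially from the paper's argument.

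The problem is your treatment of \eqref{mM}. You argue that if $u(x_0',\cdot)\equiv c$ on a single fiber, then $\partial_y u$ vanishes on $\{x_0'\}\times\iphil$, and ``analytic continuation'' for the linearized equation forces $\partial_y u\equiv 0$ on $\tphil$. This inference is false: a real-analytic solution of a linear elliptic equation can vanish on a one-dimensional fiber without vanishing identically (the fiber has codimension $d-1$, and neither weak nor strong unique continuation applies, since you do not know that the \emph{transversal} derivatives $\partial_{x_i}\partial_y u$ vanish on the fiber). Your appeal to real-analyticity for \eqref{keyass1} is also not available under the paper's hypotheses, which only assume $G\in C^2$ and $\zeta$ smooth; and you yourself flag that ``a successful proof will likely'' require additional input.

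The paper closes this gap by an indirect route that avoids analyticity entirely. Since $u^s$ is again a constrained minimizer, it satisfies the Euler--Lagrange equation, so $\partial_y u^s$ solves the linearized equation \eqref{ieq}. By construction $\partial_y u^s\le 0$ on $\tphil'\times(0,\ell)$, and the Strong Maximum Principle (Theorem~\ref{t:strongmax}) upgrades this to either strict negativity or identical vanishing; the latter is ruled out by the quantitative $L^2$-closeness of minimizers to a sharp-interface droplet profile established in \cite{GWW}. Strict monotonicity of $u^s$ on every fiber gives $m^s(x')<M^s(x')$, and since Steiner symmetrization preserves fiberwise extrema, \eqref{mM} follows for $u$. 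Strict monotonicity of $u^s$ also yields \eqref{keyass1} for $u^s$ directly, and Lemma~\ref{equimeas} transfers it back to $u$. The key idea you are missing is to exploit that $u^s$ is \emph{itself} a minimizer and to run the maximum principle on $\partial_y u^s$ rather than attempt unique continuation from a thin set for $\partial_y u$.
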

\begin{remark}\label{rem:nonuq}
The theorem does not establish uniqueness; it does not rule out existence of
more than one Steiner symmetric constrained minimizer with prescribed volume $\omega$.
\end{remark}
\begin{remark}
Since the superlevel sets of Steiner symmetrizations are simply connected
(cf.~Remark~\ref{rem:prop} below), Theorem \ref{t:ssym} allows us to control the sphericity of
constrained minimizers in $d=2$ via the Bonnesen inequality; we will explain this result in
Subsection~\ref{ss:bonn}.
\end{remark}
Alternatively to Steiner symmetrization, one can use two-point rearrangements and apply a
Gidas-Ni-Nirenberg argument to the Cahn-Hilliard problem; in Section~\ref{S:GNN} we apply
this method to derive an alternative proof of Theorem \ref{t:ssym}.
\medskip

{\bf Organization}
In Section \ref{S:steiner} we prove Theorem \ref{steinerthm}. In Section \ref{S:steinerCH},
we apply this result to deduce a first proof of Theorem~\ref{t:ssym} and in
Subsection~\ref{ss:bonn}, we explain how this leads to a new bound on the sphericity of
minimizers in $d=2$. Then in Section \ref{S:GNN} we derive an alternate proof of
Theorem~\ref{t:ssym} using two-point rearrangements.
\section{Steiner Symmetrization on the Torus}\label{S:steiner}
Symmetrization techniques have been widely used to establish symmetry of global minimizers
of various energies (see for instance \cites{BL,K,LN}).
We mention in addition the continuous symmetrization of Brock (cf. \cite{B} and the references
therein), which he has used in some settings to establish symmetry of local minimizers.

When uniqueness of a minimizer is known a priori, this fact can often be used to deduce its
symmetry. When uniqueness is not assured, it becomes important to discuss the case that the energy of a given function equals that of its symmetrization. For Dirichlet-type functionals and
Schwarz symmetrization, this has been done in \cite{BZ}; for Steiner symmetrization, the first
sufficient conditions for equality go back to \cite{K}, and sharp conditions for nonnegative Sobolev
functions satisfying zero Dirichlet boundary conditions were presented recently in \cite{CF}.
Here we consider smooth functions on the torus.

We begin by recalling the definition and properties of Steiner symmetrization. In Subsection
\ref{steq2} we collect facts about the regularity of the distribution function. Finally in Subsection
\ref{ss:steinerresult} we prove Theorem~\ref{steinerthm}.
\subsection{Definitions}
We will occasionally use the notation
\begin{eqnarray*}
\iphil^{-}:=\left[-\ell, 0\right]
\quad\hbox{and}\quad
\iphil^{+}:=\left[0,\ell\right].
\end{eqnarray*}
For a compact set $C\subseteq\tphil$ and $x'\in\tphil'$ let
$
C(x') := \{y \in\iphil\/ : \:(x',y) \in C\}
$
and let $C'$ be the set of all $x'\in\tphil'$, such that $C(x')\neq\emptyset$.
\begin{definition}[Steiner symmetrization of a set]
We denote the \uline{Steiner symmetrization of $C$ with respect to the hyperplane
$\{y = 0\}$} by $C^s$, defined as
\begin{align*}
C^{s}:=\bigcup_{x'\in C'}C^s(x')
\quad\text{where}\quad
 C^s(x')=
\left\{(x',y)\in\tphil\::\:0\leq\vert y\vert\leq\frac{1}{2}\M^1(C(x'))\right\}
\end{align*}
and analogously for $C^s_i$, the symmetrization with respect to $\{x_i=0\}$.
\end{definition}
By construction $\vert C\vert=\vert C^s\vert$; we will refer to this property as the
\emph{equimeasurability of Steiner symmetrization}.
For a function $u:\tphil\to\mathbb{R}$ and $t\in\mathbb{R}$ we denote the superlevel
set of $u$ by
\begin{align*}
\Omega_t:=\left\{x\in\tphil\colon u(x)> t \right\}.
\end{align*}
\begin{definition}[Steiner symmetrization of a function]
We define the \uline{Steiner symmetrization of $u$ with respect to the hyperplane $\{y = 0\}$}
by $u^s$, defined as
\begin{eqnarray*}
u^{s}(x):=\sup\{t\in\rzz\::\:x\in (\Omega_t)^s\}
\end{eqnarray*}
and analogously
for $u^s_i$, the symmetrization with respect to  $\{x_i=0\}$.
Moreover we define the one-dimensional \uline{distribution function} of $u$
for $x'\in\tphil'$ and level $t\in\rzz$ as
\begin{eqnarray}
\mu_{u}(x',t):=\M^1\left(\{y\in \iphil\::\:u(x',y)>t\}\right).\label{distrib}
\end{eqnarray}
\end{definition}
The equimeasurability implies in particular that $\mu_{u}(x',t)=\mu_{u^s}(x',t)$.
\begin{definition}[Iterated Steiner symmetrization]
We denote the \uline{iterated Steiner symmetrization of $u$} by $u^\star$, defined via
symmetrizing first with respect to $\{y=x_d=0\}$, then $\{x_{d-1}=0\}$ through $\{x_1=0\}$.
\end{definition}
\begin{remark}\label{comm}
Iterating the Steiner symmetrization in a different order can give different results; see Figure \ref{figure2} for an example.
\begin{figure}
\begin{tikzpicture}[scale=1.0]
\begin{scope}[shift={(-4.25,0)}]
\draw [black, thick] (-1,0) -- (3,0);
\draw [black, thick] (1,0) -- (1,4);
\draw [black, very thick] (-1,0) -- (3,4) -- (1,0) -- (-1,0);
\draw (-1,0) -- (-1,-0.1);
\draw (1,0) -- (1,-0.1);
\draw (3,0) -- (3,-0.1);
\draw (1,2) -- (0.9,2);
\draw (1,4) -- (0.9,4);
\node [left] at (1,4) {\scriptsize{$2$}};
\node [left] at (1,2) {\scriptsize{$1$}};
\node [below] at (-1,0) {\scriptsize{$-1$}};
\node [below] at (1,0) {\scriptsize{$0$}};
\node [below] at (3,0) {\scriptsize{$1$}};
\node [below] at (1,-0.8) {\small{(a) The triangle $\Omega$}};
\end{scope}

\draw [black, thick] (-0.5,2) -- (2.5,2);
\draw [black, thick] (1,0) -- (1,4);
\draw [black, very thick] (0,2) -- (1,4) -- (2,2) -- (1,0) -- (0,2);
\node [below left] at (0.2,2) {\scriptsize{$-0.5$}};
\node [below right] at (1.8,2) {\scriptsize{$0.5$}};
\node [left] at (1,4) {\scriptsize{$1$}};
\node [left] at (1,0) {\scriptsize{$-1$}};
\node [below] at (1,-0.8) {\small{(b) $S_{2}\circ S_{1}(\Omega)$}};
\draw (0,2) -- (0,1.9);
\draw (2,2) -- (2,1.9);
\draw (1,4) -- (0.9,4);
\draw (1,0) -- (0.9,0);

\begin{scope}[shift={(4.25,0)}]
\draw [black, thick] (-1,2) -- (3,2);
\draw [black, thick] (1,0.5) -- (1,3.5);
\draw [black, very thick] (-1,2) -- (1,3) -- (3,2) -- (1,1) -- (-1,2);
\node [below] at (-1,2) {\scriptsize{$-1$}};
\node [below] at (3,2) {\scriptsize{$1$}};
\node [above left] at (1,2.8) {\scriptsize{$0.5$}};
\node [below left] at (1,1.17) {\scriptsize{$-0.5$}};
\node [below] at (1,-0.8) {\small{(c) $S_{1}\circ S_{2}(\Omega)$}};
\draw (-1,2) -- (-1,1.9);
\draw (3,2) -- (3,1.9);
\draw (1,1) -- (0.9,1);
\draw (1,3) -- (0.9,3);
\end{scope}
\end{tikzpicture}
\caption{The effect of repeated Steiner symmetrization depends on the order.}\label{figure2}
\end{figure}
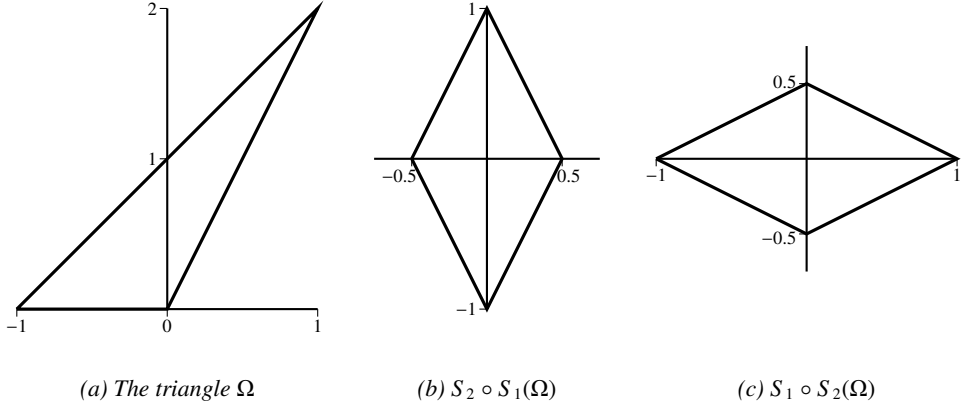
\end{remark}
\begin{remark}\label{rem:prop}
By construction, $u^s$ and $u^\star$ have the following properties:
\begin{itemize}
\item[(i)] $u^s(x',y)=u^s(x',-y)$ and similarly for $u^s_i$;
\item[(ii)] $\partial_{y}u^s(x',y)\leq 0$ on $\tphil'\times I_\ell^+$ and similarly for $u^s_i$;
\item[(iii)] the superlevel sets of $u^\star$ are simply connected and starshaped with respect to
the origin.
\end{itemize}
\end{remark}
In  \cite[Theorem 2.31]{K} it was proved that Steiner symmetrization on the torus does not
increase energy in the sense that
\begin{eqnarray}
\label{rearr1}&&\int\limits_{\tphil}\vert\nabla u^s\vert^2\:dx
\leq\int\limits_{\tphil}\vert\nabla u\vert^2\:dx\\
\label{rearr2}&&\int\limits_{\tphil}G(u^s)\:dx
=\int\limits_{\tphil}G(u)\:dx\qquad \text{for measurable functions $G$}.
\end{eqnarray}
We are interested in the question of when equality in \eqref{rearr1} implies $u=u^s$ (up to a
shift).
\subsection{Regularity of the distribution function}\label{steq2}
In this section we consider the one
dimensional distribution $\mu_{u}(x',t)$
for $x'\in\tphil'$ and $t\in\rzz$. We will use regularity of the distribution function in the next subsection for the proof
of Theorem \ref{steinerthm}.

Clearly $\mu_u$ is measurable both in $x'$ and $t$. Even if $u$ is smooth, however, the
function $\mu_{u}(\cdot,t)$ need not be continuous. For $x'\in\tphil'$ fixed and without assuming smoothness of $u$, our first lemma considers right- and left-continuity of the
distribution function in $t$. In particular, one observes that $\mu_{u}(x',\cdot)$ is continuous if
and only if $\M^1\left(\left\{y\in\iphil: u(x',y)=t\right\}\right)=0$ for all $t$.
\begin{lemma}\label{distri}
Let $u(x',\cdot)$ be measurable for each $x'\in\tphil'$. Then for all $x'\in\tphil'$ the distribution
function $\mu_{u}(x',\cdot)$ is right-continuous in the sense that
\begin{eqnarray}\label{distri1}
\lim_{\delta\downarrow 0}\left(\mu_{u}(x',t+\delta)-\mu_{u}(x',t)\right)=0
\end{eqnarray}
for all $x'\in\tphil'$. Moreover, we have
\begin{eqnarray}\label{distri2}
\lim_{\delta\downarrow 0}\left(\mu_{u}(x',t-\delta)-\mu_{u}(x',t)\right)=\M^1\left(\left\{y\in\iphil: u(x',y)=t\right\}\right)
\end{eqnarray}
for all $x'\in\tphil'$.
\end{lemma}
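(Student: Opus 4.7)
The statement is a purely measure-theoretic fact about the distribution function of a measurable function on the bounded interval $I_\ell$, so the plan is to fix $x' \in \mathbb{T}_\ell'$ and reduce both claims to continuity of Lebesgue measure along monotone families of superlevel sets. Denote $A_t := \{y \in I_\ell : u(x',y) > t\}$, so that $\mu_u(x',t) = \mathcal{L}^1(A_t)$. The key observation is that $t \mapsto A_t$ is monotone decreasing in $t$, and the families arising from $t+\delta$ and $t-\delta$ behave differently in the limit because the defining inequality is strict.

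For \eqref{distri1}, I would first show that
\[
\bigcup_{\delta > 0} A_{t+\delta} = A_t.
\]
The inclusion ``$\subseteq$'' is immediate, and ``$\supseteq$'' uses the strictness of the inequality: if $u(x',y) > t$, then $u(x',y) > t + \delta$ for some sufficiently small $\delta > 0$. Since $\delta \mapsto A_{t+\delta}$ is monotone increasing as $\delta \downarrow 0$, continuity of $\mathcal{L}^1$ from below yields $\mathcal{L}^1(A_{t+\delta}) \uparrow \mathcal{L}^1(A_t)$, which is exactly \eqref{distri1}.

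For \eqref{distri2}, I would analogously show that
\[
\bigcap_{\delta > 0} A_{t-\delta} = \{y \in I_\ell : u(x',y) \geq t\}.
\]
Again one inclusion is clear, and the other follows because $u(x',y) > t - \delta$ for every $\delta > 0$ forces $u(x',y) \geq t$. Here $\delta \mapsto A_{t-\delta}$ is monotone decreasing as $\delta \downarrow 0$, and since $A_{t-\delta} \subseteq I_\ell$ has finite measure, continuity of $\mathcal{L}^1$ from above gives
\[
\lim_{\delta \downarrow 0} \mathcal{L}^1(A_{t-\delta}) = \mathcal{L}^1\bigl(\{y : u(x',y) \geq t\}\bigr) = \mathcal{L}^1(A_t) + \mathcal{L}^1\bigl(\{y : u(x',y) = t\}\bigr),
\]
where in the last step I use that $\{u \geq t\}$ is the disjoint union of $A_t$ and the level set $\{u = t\}$. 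Subtracting $\mu_u(x',t) = \mathcal{L}^1(A_t)$ gives \eqref{distri2}.

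There is essentially no substantial obstacle: the proof is a direct application of monotone continuity of measure, and the only point that requires any care is recognizing that the strict inequality in the definition of $A_t$ is responsible for the asymmetry between the right- and left-hand limits (producing a clean limit on the right but an atom equal to the measure of the level set on the left). Neither smoothness nor any hypothesis from Hypothesis~\ref{hyp:1} is needed at this stage.
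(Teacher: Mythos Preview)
Your proof is correct: the argument via monotone continuity of Lebesgue measure on the nested families $A_{t+\delta}$ and $A_{t-\delta}$ is the standard elementary route, and all the set identities and the use of finite measure for continuity from above are handled properly. The paper does not actually give a proof of this lemma but simply cites Section~2 of \cite{Ta}; your self-contained argument is exactly what one would expect to find there and is entirely appropriate here.
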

Formulas \eqref{distri1} - \eqref{distri2} can be derived from Section 2 in \cite{Ta}. We now seek
additional information about the regularity of $\mu_u$.
The proof of the next lemma follows via a mild adaptation of the proof of the BV regularity from \cite[Lemma 4.1]{CF}.
\begin{lemma}\label{mubv}
Let $u\in W^{1,1}(\tphil)$ and consider $\mu_u$ given by \eqref{distrib}
for $x'\in\tphil'$ and $t\in(m(x'),M(x'))$. There holds
\begin{eqnarray*}
\mu_{u}(\cdot,\cdot)\in BV(\tphil'\times\rzz).
\end{eqnarray*}
\end{lemma}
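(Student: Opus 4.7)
The plan is to adapt the argument of \cite[Lemma 4.1]{CF} to the periodic setting. First I would approximate $u$ by smooth periodic functions: standard mollification on the torus produces $u_n\in C^\infty(\tphil)$ with $u_n\to u$ in $W^{1,1}(\tphil)$ and a.e. Writing $\mu_n:=\mu_{u_n}$, Lemma~\ref{distri} guarantees that $\mu_u(x',\cdot)$ is continuous at a.e.~$t$ for each $x'$, and an elementary argument using a.e.\ convergence of $u_n$ shows $\mu_n(x',t)\to\mu_u(x',t)$ at every such continuity point. Since $0\le\mu_n\le 2\ell$, dominated convergence gives $\mu_n\to\mu_u$ in $L^1_{\mathrm{loc}}(\tphil'\times\rzz)$. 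By lower semicontinuity of the total variation under $L^1_{\mathrm{loc}}$-convergence, it then suffices to establish a uniform bound on $|D\mu_n|(\tphil'\times\rzz)$.

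For smooth $u_n$, I would decompose $|D\mu_n|$ into its $t$-component and its $x'$-component. The $t$-component is immediate: for each fixed $x'$, the map $t\mapsto\mu_n(x',t)$ is non-increasing, taking the value $2\ell$ for $t<\min_y u_n(x',y)$ and the value $0$ for $t\ge \max_y u_n(x',y)$, so its total variation in $t$ equals exactly $2\ell$, contributing $|\tphil|$ after integration in $x'$. For the $x'$-component, the starting point is the layer-cake identity
\begin{equation*}
\mu_n(x',t)=\int_{\iphil}\chi_{\{u_n(x',y)>t\}}\,dy,
\end{equation*}
combined with the one-dimensional co-area formula applied to $u_n(x',\cdot)$ on $\iphil$. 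Approximating the indicator by smooth cutoffs in $t$ and passing to the limit (using that the critical values of $u_n(x',\cdot)$ form a $t$-null set by Sard's theorem for smooth $1$-variable functions) yields, for each $j=1,\dots,d-1$,
\begin{equation*}
\int_{\tphil'}\int_{\rzz}|\partial_{x'_j}\mu_n(x',t)|\,dt\,dx'\le\int_{\tphil}|\partial_{x'_j}u_n|\,dx,
\end{equation*}
which upon summation over $j$ is controlled by $\|\nabla u_n\|_{L^1(\tphil)}$.

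Combining the two components produces the uniform bound $|D\mu_n|(\tphil'\times\rzz)\le|\tphil|+\|\nabla u_n\|_{L^1(\tphil)}$, and passing to the limit as $n\to\infty$ completes the proof. I expect the main technical step to be the rigorous justification of the co-area estimate in the $x'$-component, but this is a purely local computation on the one-dimensional slices $\{x'\}\times\iphil$ that does not interact with the periodic structure. The only substantive difference from the Cianchi--Fusco argument is that the boundary correction present in their setting (from zero Dirichlet data on $\partial\Omega$) is replaced here by the trivial $t$-variation coming from the two constant values $0$ and $2\ell$ that $\mu_n$ takes outside the range of $u_n(x',\cdot)$.
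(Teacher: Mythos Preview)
Your proposal is correct and follows exactly the approach the paper indicates: the paper gives no proof beyond the sentence ``The proof of the next lemma follows via a mild adaptation of the proof of the BV regularity from \cite[Lemma~4.1]{CF},'' and your sketch is precisely such an adaptation. Your identification of the one substantive change---that the Dirichlet boundary contribution in \cite{CF} is replaced by the trivial $t$-variation between the constant values $0$ and $2\ell$ on the torus---matches what the paper means by ``mild.''
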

Lemma \ref{mubv} implies the existence of weak partial
derivatives of $\mu_{u}$; we refer to \cite[Section 1.7.2 and Theorem 4 of Section 6.1.3]{EG}
and \cite[Lemma 4.1]{CF}, where the explicit form of the partial derivatives was computed. We
summarize the result in the following proposition.
For the rest of the section we will assume that $u\in C^1(\tphil)$.
\begin{proposition}\label{mureg}
Let $u\in C^{1}(\tphil)$ and assume that \eqref{keyass1} holds.
Then the following formulas hold for $\mu_{u}$.
\begin{itemize}
\item[(i)] For $\M^{d-1}$ a.e. $x'\in\tphil'$, $\mu_{u}(x',\cdot)$ is differentiable for $\M^1$ a.e.
$t\in(m(x'),M(x'))$ and
\begin{eqnarray}
\label{mureg1}\qquad\partial_{t}\mu_{u}(x',t)=-\int\limits_{\{y\in\iphil:u(x',y)=t\}}
\frac{1}{\vert\partial_{y}u(x',y)\vert}\:d\HM^{0}(y).
\end{eqnarray}
\item[(ii)] For $\M^{d-1}$ a.e. $x'\in\tphil'$ and $\M^{1}$ a.e. $t\in(m(x'),M(x'))$, $\mu_{u}(\cdot,t)$ is
differentiable w.r.t. $x'$ and
\begin{eqnarray}
\label{mureg2}\qquad\partial_i\mu_{u}(x',t)=\int\limits_{\{y\in\iphil:u(x',y)=t\}}
\frac{\partial_iu(x',y)}{\vert\partial_{y}u(x',y)\vert}\:d\HM^{0}(y)
\end{eqnarray}
for all $i\in\{1,\ldots,d-1\}$.
\end{itemize}
\end{proposition}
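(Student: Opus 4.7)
The plan is to adapt the argument of \cite[Lemma~4.1]{CF} to the periodic setting. Because $\tphil$ has no boundary, there are no boundary contributions to worry about, but level-set endpoints must be interpreted as crossings on a circle rather than an interval. By Lemma~\ref{mubv} the distribution function $\mu_{u}$ already possesses weak partial derivatives in the BV sense; the task is to identify them with the explicit pointwise formulas \eqref{mureg1} and \eqref{mureg2}, using the one-dimensional Coarea formula and the implicit function theorem at regular level-set points.

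For part (i), I would fix $x'\in\tphil'$ outside the null set on which \eqref{keyass3} fails. Applying the one-dimensional Coarea formula to $u(x',\cdot)$ with test function $g(y):=\chi_{\{u(x',\cdot)>t_{0}\}}(y)/|\partial_{y}u(x',y)|$ (set to $0$ on the exceptional null set where $\partial_{y}u$ vanishes), and invoking \eqref{keyass3} to discard that set from the area integral, one obtains
\begin{eqnarray*}
\mu_{u}(x',t_{0})=\int_{t_{0}}^{M(x')}\sum_{y\in\{u(x',\cdot)=t\}}\frac{1}{|\partial_{y}u(x',y)|}\,dt.
\end{eqnarray*}
Differentiating in $t_{0}$ yields \eqref{mureg1} at $\M^{1}$-a.e.\ $t_{0}\in(m(x'),M(x'))$.

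For part (ii), I would proceed via the implicit function theorem. For $\M^{d-1}$-a.e.\ $x_{0}'\in\tphil'$ and $\M^{1}$-a.e.\ $t\in(m(x_{0}'),M(x_{0}'))$, the level set $\{y\in\iphil:u(x_{0}',y)=t\}$ is a finite collection of regular points $y_{1}<\cdots<y_{2k}$, each with $\partial_{y}u(x_{0}',y_{j})\neq 0$ (the count is even because on the torus each superlevel-set arc has two endpoints). The implicit function theorem produces $C^{1}$ functions $\psi_{j}$ on a neighborhood of $x_{0}'$ with $u(x',\psi_{j}(x'))=t$ and
\begin{eqnarray*}
\partial_{i}\psi_{j}(x')=-\frac{\partial_{i}u(x',\psi_{j}(x'))}{\partial_{y}u(x',\psi_{j}(x'))}.
\end{eqnarray*}
Writing $\mu_{u}(x',t)$ as a signed sum of the $\psi_{j}$'s (with the sign at $y_{j}$ determined by whether it is a left or right arc endpoint, i.e.\ by the sign of $\partial_{y}u(x_{0}',y_{j})$), differentiating in $x_{i}$, and noting that in either case the endpoint sign absorbs the sign of $\partial_{y}u$ so that $-\partial_{i}u/\partial_{y}u$ becomes $\partial_{i}u/|\partial_{y}u|$, yields \eqref{mureg2}.

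The main obstacle is the measure-theoretic bookkeeping: ensuring that the number and type of level-set crossings are locally constant on a full-measure subset of $\tphil'\times\rzz$, and that the resulting pointwise formulas coincide with the BV weak derivatives from Lemma~\ref{mubv}. The exceptional sets are controlled by combining hypothesis \eqref{keyass1} with Fubini (yielding \eqref{keyass3}) and with Sard's theorem applied to $u(x',\cdot)$ for a.e.\ $x'$, while identification of the pointwise and distributional derivatives is the standard fact for BV functions recalled in \cite[Section~6.1.3]{EG}.
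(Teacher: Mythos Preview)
The paper does not supply its own proof of this proposition; it simply records that the BV regularity from Lemma~\ref{mubv} together with \cite[Section~1.7.2 and Theorem~4 of Section~6.1.3]{EG} and \cite[Lemma~4.1]{CF} yield the explicit formulas, and states the result. Your proposal is exactly a reconstruction of that deferred argument---the Coarea representation for part~(i) (which the paper itself spells out in Remark~\ref{regdist} for $\mu_{u}^{reg}$, and then observes that \eqref{keyass3} forces $\mu_{u}=\mu_{u}^{reg}$), and the implicit-function-theorem computation for part~(ii)---so you are doing precisely what the paper intends, and the outline is correct.
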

For arbitrary  $x'\in\tphil'$ we now decompose the set
\begin{eqnarray*}
\lefteqn{\left\{y\in\iphil : t<u(x',y)<M(x')\right\}}\\
&=&\left\{y\in\iphil : \partial_{y}u(x',y)\ne 0, t<u(x',y)<M(x')\right\}\\
&&\cup
\left\{y\in\iphil : \partial_{y}u(x',y)=0, t<u(x',y)<M(x')\right\}.
\end{eqnarray*}
We set
\begin{eqnarray}
\mu_{u}^{reg}(x',t):=\M^1\left(\left\{y\in\iphil : \partial_{y}u(x',y)\ne 0, t<u(x',y)<M(x')\right\}\right)
\label{g}
\end{eqnarray}
and
\begin{eqnarray}
\mu_{u}^{sing}(x',t):=\M^1\left(\left\{y\in\iphil : \partial_{y}u(x',y)=0, t<u(x',y)<M(x')\right\}\right).
\label{h}
\end{eqnarray}
In the following remark we observe that $\mu_{u}^{reg}$ is more regular than $\mu_{u}$.
In particular we
get a pointwise  $t$ - derivative of $\mu_{u}^{reg}$.
\begin{remark}\label{regdist}
For any $x'\in\tphil'$ and $t\in (m(x'),M(x'))$,
we obtain from the Coarea Formula (see e.g.\ \cite[chapter 3.4]{EG} or \cite[chapter 2.12]
{AFP}) that the function $\mu_{u}^{reg}$ from \eqref{g} satisfies
\begin{eqnarray}\label{regdist1}
\mu_{u}^{reg}(x',t)
=
\int\limits_{t}^{M(x')}\int\limits_{\{y\in \iphil : u(x',y)=s\}}\frac{1}{\vert \partial_{y}
u(x',y)\vert}\:d\HM^{0}(y)\:ds,
\end{eqnarray}
and hence that
\begin{eqnarray*}
\partial_{t}\mu_{u}^{reg}(x',t)&=&-\int\limits_{\{y\in \iphil : u(x',y)=t\}} \frac{1}
{ \vert\partial_{y}u(x',y)\vert } \:d\HM^{0}(y).
\end{eqnarray*}
\end{remark}
For $\mu_{u}^{sing}$ we obtain the following result (see Lemma 2.4 in \cite{CF02}).
\begin{lemma}\label{singu}
Let $u\in C^1(\tphil)$. For any $x'\in\tphil'$ and any $t\in (m(x'),M(x'))$ the function
$\mu_{u}^{sing}$ defined in \eqref{h}
is nonincreasing and right-continuous in $t$ and satisfies $\partial_{t}\mu_{u}^{sing}(x',t)=0$ for
$\M^1$ - almost all $t\in (m(x'),M(x'))$.
\end{lemma}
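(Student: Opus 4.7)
The plan is to establish the three claims---monotonicity, right-continuity, and vanishing derivative---in turn, fixing $x'\in\tphil'$ throughout and writing
$$E(t) := \{y\in\iphil : \partial_{y}u(x',y)=0,\ t<u(x',y)<M(x')\}.$$
Monotonicity is immediate since $E(t_2)\subset E(t_1)$ whenever $t_1<t_2$. Right-continuity at $t$ follows from continuity of Lebesgue measure from below: for any sequence $t_n\downarrow t$ with $t_n>t$, the sets $E(t_n)$ increase to $E(t)$, so $\M^1(E(t_n))\to\M^1(E(t))$.

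For the vanishing derivative I plan to reinterpret $\mu_{u}^{sing}(x',\cdot)$ as (up to a constant) minus the distribution function of a measure that is singular with respect to Lebesgue measure, and then invoke the classical differentiation theorem for monotone functions. Write $f:=u(x',\cdot)\in C^1(\iphil)$ and $A:=\{y\in\iphil:f'(y)=0\}$, and consider the pushforward measure $\nu(B):=\M^1(A\cap f^{-1}(B))$ on $\rz$. Then $\mu_{u}^{sing}(x',t)=\nu((t,M(x')))$, and $\nu$ is concentrated on $f(A)$, the set of critical values of $f$. Since $f$ is $C^1$ on an interval, the one-dimensional Sard theorem yields $\M^1(f(A))=0$, hence $\nu\perp\M^1$.

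Setting $G(t):=\nu((-\infty,t])$, a bounded nondecreasing function whose Lebesgue--Stieltjes measure is $\nu$, the standard identification of the almost-everywhere derivative of a monotone function with the Radon--Nikodym density of the absolutely continuous part of the associated measure yields $G'(t)=0$ for $\M^1$-a.e. $t$. Since $\mu_{u}^{sing}(x',t)=\nu((-\infty,M(x')))-G(t)$ differs from $-G$ only by a $t$-independent constant, we conclude $\partial_{t}\mu_{u}^{sing}(x',t)=0$ for $\M^1$-a.e. $t\in(m(x'),M(x'))$. The main obstacle is purely expository: assembling the one-dimensional Sard theorem with the Lebesgue decomposition of the Stieltjes measure of a monotone function. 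No genuinely nontrivial estimate is required.
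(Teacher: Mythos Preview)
Your argument is correct. The paper does not supply its own proof of this lemma but instead cites Lemma~2.4 of Cianchi--Fusco~\cite{CF02}; the mechanism there is the same as yours: one observes that $\mu_u^{sing}(x',\cdot)$ is the survival function of the pushforward of $\M^1\llcorner\{f'=0\}$ under $f=u(x',\cdot)$, invokes the one-dimensional Sard theorem to see this measure is singular with respect to $\M^1$, and concludes via the differentiation theorem for monotone functions that the almost-everywhere derivative vanishes.
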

A main point for us is that \eqref{keyass3} implies that
$\mu=\mu_{reg}$ for almost all $x'\in\tphil'$.
\subsection{Sufficient condition for equality of Dirichlet-energy on the torus}
\label{ss:steinerresult}
We will now show that the proof from \cite{CF} can be adapted under Hypothesis \ref{hyp:1} for
functions on the torus. We assume $u\in C^1(\tphil)$ since this is the case in our application
and elements of the proof simplify.

Before turning to the proof of Theorem~\ref{steinerthm}, we need to link ``plateaus'' of $u$ with
those of the symmetrization. The next lemma is the analogue of \cite[Proposition 2.3]{CF}; it simplifies in the $C^1$ setting but we omit the proof since the difference is not significant.
\begin{lemma}\label{equimeas}
Let $u\in C^1(\tphil)$. Then for all $x'\in\tphil'$ and all $t\in (m(x'),M(x'))$ we have
\begin{eqnarray}
\lefteqn{\M^1\left(\left\{y\in\iphil:\partial_{y}u(x',y)=0,\:t<u(x',y)<M(x')\right\}\right)}\notag\\
&& =
\M^1\left(\left\{y\in\iphil:\partial_{y}u^{s}(x',y)=0,\:t<u^{s}(x',y)<M(x')\right\}\right).
\label{eqeqmeas}
\end{eqnarray}
\end{lemma}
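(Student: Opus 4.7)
The plan is to prove Lemma~\ref{equimeas} by recognizing $\mu_u^{reg}$ and $\mu_u^{sing}$ as the two parts of the Lebesgue decomposition of the distribution function $\mu_u(x',\cdot)$ on the interval $(m(x'), M(x'))$, and then appealing to the uniqueness of that decomposition together with the fact that $\mu_u(x',t) = \mu_{u^s}(x',t)$.

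First I would use the equimeasurability of Steiner symmetrization to conclude that $\mu_u(x',t) = \mu_{u^s}(x',t)$ for every $t\in\rzz$. Applying Lemma~\ref{distri} at $t=M(x')$ yields in particular $\M^1(\{y\in\iphil:u(x',y)=M(x')\}) = \M^1(\{y\in\iphil:u^s(x',y)=M(x')\})$, since both sides equal the jump of the common distribution function at $M(x')$. Splitting the superlevel set at any $t\in(m(x'),M(x'))$ as $\{u>t\} = \{u=M(x')\}\sqcup\{\partial_y u\neq 0,\, t<u<M(x')\}\sqcup\{\partial_y u=0,\, t<u<M(x')\}$ and combining with the analogous splitting for $u^s$ then gives the key identity
\begin{equation*}
\mu_u^{reg}(x',t) + \mu_u^{sing}(x',t) = \mu_{u^s}^{reg}(x',t) + \mu_{u^s}^{sing}(x',t) \qquad\text{for all } t\in(m(x'),M(x')).
\end{equation*}

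Second, I would recognize this identity as equating two Lebesgue decompositions of the same BV function. By Remark~\ref{regdist}, $\mu_u^{reg}(x',\cdot)$ is absolutely continuous (it is the integral of an $L^1$ density); by Lemma~\ref{singu}, $\mu_u^{sing}(x',\cdot)$ is nonincreasing, right-continuous, and has vanishing derivative $\M^1$-a.e., so it is singular in the Lebesgue sense. The analogous statements hold for $u^s$: although $u^s$ is only Lipschitz (and not necessarily $C^1$), the coarea formula for Lipschitz maps supplies the integral representation \eqref{regdist1} for $\mu_{u^s}^{reg}$, so it is AC; meanwhile the proof of Lemma~2.4 in \cite{CF02} (cited in the lead-up to Lemma~\ref{singu}) applies verbatim to the Lipschitz monotone profile $u^s(x',\cdot)$ on $\iphil^+$, so $\mu_{u^s}^{sing}(x',\cdot)$ is singular as well.

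Third, both $\mu_u^{reg}(x',t)$ and $\mu_u^{sing}(x',t)$ tend to $0$ as $t\uparrow M(x')$ (the corresponding sets shrink to the empty set), and the same holds for $u^s$. This fixes the additive constants. Uniqueness of the Lebesgue decomposition on $(m(x'),M(x'))$ therefore forces $\mu_u^{reg}=\mu_{u^s}^{reg}$ and $\mu_u^{sing}=\mu_{u^s}^{sing}$ pointwise on that interval, and the second identity is precisely \eqref{eqeqmeas}.

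The main obstacle is the verification of the regularity facts for $u^s$, because the arguments that establish them for $u$ (via the Coarea Formula and a Sard-type step) are stated only for $u\in C^1(\tphil)$ in the excerpt. This is resolved by invoking the Lipschitz version of the Coarea Formula together with the fact that $u^s(x',\cdot)$ is a symmetric decreasing rearrangement on $\iphil$, for which the plateau/slow-variation structure is analyzed exactly as in \cite{CF02}. With that in hand the Lebesgue-decomposition argument runs cleanly and yields the claimed pointwise equality for every $x'\in\tphil'$ and every $t\in(m(x'),M(x'))$.
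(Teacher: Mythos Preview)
Your argument is correct and is essentially the approach of \cite[Proposition~2.3]{CF}, to which the paper defers (the authors omit the proof entirely, noting only that the $C^1$ setting simplifies matters). The Lebesgue decomposition of the one-dimensional distribution function into its absolutely continuous part $\mu^{reg}$ and singular part $\mu^{sing}$, combined with equimeasurability and the normalization at $t\uparrow M(x')$, is exactly the mechanism used there; your care in noting that $u^s(x',\cdot)$ is only Lipschitz and that the coarea and Sard-type ingredients from \cite{CF02} still apply is appropriate and matches what the reference requires.
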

With this lemma in hand, we turn to the proof of the main theorem.
\begin{proof}[Proof of Theorem~\ref{steinerthm}]
We denote by $N$ a set with $\M^{d-1}(N)=0$
such that \eqref{keyass3} and \eqref{mureg1} - \eqref{mureg2} hold for all $x'\in\tphil'\setminus
N$.

\underline{Step 1.}[Derivatives of the distribution function in terms of the Steiner
symmetrization.] In light of \eqref{keyass3} and \eqref{eqeqmeas}, the one dimensional Coarea
Formula applied to the Dirichlet integral of $u^s$ gives
\begin{eqnarray}\label{coarea}
\int\limits_{\tphil}\vert\nabla u^s\vert^2\:dx=\int\limits_{\tphil'}\int\limits_{m(x')}^{M(x')}
\int\limits_{\{y\in\iphil:u^s(x',y)=t\}}
\frac{\vert\nabla u^s\vert^2}{\vert\partial_{y}u^s\vert} \:d\HM^{0}(y)\:dt\:dx'.
\end{eqnarray}
The equimeasurability of Steiner symmetrization and Proposition \ref{mureg} imply
\begin{eqnarray}
\lefteqn{-\int\limits_{\{y\in\iphil:u^s(x',y)=t\}}\frac{1}{\vert\partial_{y}u^s(x',y)\vert}
\:d\HM^{0}(y)}\notag\\
&\overset{\eqref{mureg1}}=&\partial_{t}\mu_{u^s}(x',t)\notag\\
&=&\partial_{t}\mu_{u}(x',t)=-\int\limits_{\{y\in\iphil:u(x',y)=t\}}\frac{1}{\vert\partial_{y}
u(x',y)\vert}\:d\HM^{0}(y)\label{distr1}
\end{eqnarray}
for all $x'\in\tphil'\setminus N$ and $\M^1$ a.e. $t\in(m(x'),M(x'))$. Similarly, there holds
\begin{eqnarray}
\lefteqn{\int\limits_{\{y\in\iphil:u^s(x',y)=t\}}\frac{\partial_{i}u^s(x',y)}{\vert\partial_{y}
u^s(x',y)\vert}\:d\HM^{0}(y)}\notag\\
&\overset{\eqref{mureg2}}=&\partial_{i}\mu_{u^s}(x',t)\notag\\
&=&\partial_{i}\mu_{u}(x',t)=
\int\limits_{\{y\in\iphil:u(x',y)=t\}}\frac{\partial_{i}u(x',y)}{\vert\partial_{y}u(x',y)\vert}
\:d\HM^{0}(y)\label{distr2}
\end{eqnarray}
for all $i\in\{1,\ldots,d-1\}$, $x'\in\tphil'\setminus N$, and $t\in(m(x'),M(x'))$.
Using that $u^{s}$ is symmetric and---because of Lemma \ref{equimeas}---satisfies
\eqref{keyass1}, we simplify the  left-hand sides of \eqref{distr1} and \eqref{distr2}
to deduce the formulas
\begin{eqnarray}
\label{distr3}\partial_{t}\mu_{u}(x',t)&=&-\frac{2}{\vert\partial_{y}u^{s}\vert}\Big
\vert_{\{y\in\iphil:u^{s}(x',y)=t\}}
\end{eqnarray}
and
\begin{eqnarray}
\label{distr4}\partial_{i}\mu_{u}(x',t)&=&\frac{2\partial_{i}u^{s}}{\vert\partial_{y}u^{s}\vert}
\Big\vert_{\{y\in\iphil:u^{s}(x',y)=t\}}
\end{eqnarray}
for all $i\in\{1,\ldots,d-1\}$, and for all $x'\in\tphil'\setminus N$ and $\M^1$-a.e. $t\in(m(x'),M(x'))
$.

\underline{Step 2.}[Cauchy-Schwarz and isoperimetric arguments.] For  $x'\in\tphil'\setminus N$ and the $\M^1$-a.e. $t\in(m(x'),M(x'))$ identified in Step 1, we use formulas
\eqref{distr3} - \eqref{distr4} to express
\begin{eqnarray*}
&&\int\limits_{\{y\in\iphil:u^{s}(x',y)=t\}}\frac{\vert\nabla u^{s}\vert^2}{\vert\partial_{y}u^{s}
\vert}\:d\HM^{0}(y)
=
\frac{2}{\vert\partial_{y}u^{s}\vert}\left(\sum_{i=1}^{d-1}\vert\partial_{i} u^{s}\vert^2+
\vert\partial_{y}u^{s}\vert^2\right)\Big\vert_{\{y\in\iphil:u^{s}(x',y)=t\}}\\
&&=
-\partial_{t}\mu_{u}(x',t)\left(\sum_{i=1}^{d-1}\frac{\vert\partial_{i}\mu_{u}(x',t)\vert^2}
{\vert\partial_{t}\mu_{u}(x',t)\vert^2}+\frac{4}{\vert\partial_{t}\mu_{u}(x',t)\vert^2}\right)
\Big\vert_{\{y\in\iphil:u^{s}(x',y)=t\}}\\
&&=
\int\limits_{\{y\in\iphil:u(x',y)=t\}}\frac{1}{\vert\partial_{y}u\vert}\:d\HM^{0}(y)
\left(\sum_{i=1}^{d-1}\frac{\left(\int\limits_{\{y\in\iphil:u(x',y)=t\}}\frac{\partial_{i}u}
{\vert\partial_{y}u\vert}\:d\HM^{0}(y)\right)^2}{\left(\int\limits_{\{y\in\iphil:u(x',y)=t\}}\frac{1}
{\vert\partial_{y}u\vert}\:d\HM^{0}(y)\right)^2}
+
\frac{4}{\left(\int\limits_{\{y\in\iphil:u(x',y)=t\}}\frac{1}{\vert\partial_{y}u\vert}\:d\HM^{0}(y)
\right)^2}\right).
\end{eqnarray*}
According to the Cauchy-Schwarz inequality, there holds
\begin{eqnarray}\label{cau1}
\lefteqn{\left(\int\limits_{\{y\in\iphil:u(x',y)=t\}}
\frac{\partial_{i}u}{\vert\partial_{y}u\vert}\:d\HM^{0}(y)\right)^2}\notag\\
&&\leq
\int\limits_{\{y\in\iphil:u(x',y)=t\}}\frac{\vert\partial_{i}u\vert^2}{\vert\partial_{y}u\vert}
\:d\HM^{0}(y)
\int\limits_{\{y\in\iphil:u(x',y)=t\}}\frac{1}{\vert\partial_{y}u\vert}\:d\HM^{0}(y),
\end{eqnarray}
with equality  if and only if $\partial_{i}u=c_{i}(x',t)$ for some function $c_{i}(x',t)$ that does not
depend on $y$. This implies
\begin{eqnarray}\label{appcf}
\lefteqn{\int\limits_{\{y\in\iphil:u^{s}(x',y)=t\}}\frac{\vert\nabla u^{s}\vert^2}{\vert\partial_{y}
u^{s}\vert}\:d\HM^{0}(y)}\notag\\
&&\leq
\sum_{i=1}^{d-1}\int\limits_{\{y\in\iphil:u(x',y)=t\}}\frac{\vert\partial_{i}u\vert^2}
{\vert\partial_{y}u\vert}\:d\HM^{0}(y)
+
\frac{4}{\int\limits_{\{y\in\iphil:u(x',y)=t\}}\frac{1}{\vert\partial_{y}u\vert}\:d\HM^{0}(y)}.
\end{eqnarray}
Using that $u$ is $\ell$-periodic, we deduce for all $t\in (m(x'),M(x'))$
from the isoperimetric inequality on $S^1$  that
\begin{eqnarray}\label{appiso}
2\leq\HM^{0}(\{y\in\iphil:u(x',y)=t\})=\int\limits_{\{y\in\iphil:u(x',y)=t\}}\:d\HM^{0}(y).
\end{eqnarray}
Thus we may estimate
\begin{eqnarray}\label{cau2}
4&\overset{\eqref{appiso}}\leq&\left(\int\limits_{\{y\in\iphil:u(x',y)=t\}}\:d\HM^{0}(y)
\right)^2\notag\\
&\leq&
\int\limits_{\{y\in\iphil:u(x',y)=t\}}\frac{\vert\partial_{y}u\vert^2}{\vert\partial_{y}u\vert}
\:d\HM^{0}(y)
\int\limits_{\{y\in\iphil:u(x',y)=t\}}\frac{1}{\vert\partial_{y}u\vert}\:d\HM^{0}(y),
\end{eqnarray}
where for the second inequality we have again used the Cauchy-Schwarz inequality. In this
case equality holds if and only if $\vert\partial_{y}u\vert=c(x',t)$ for some nonnegative function
$c(x',t)$ that does not depend on $y$. Substituting \eqref{cau2} into \eqref{appcf} yields
\begin{eqnarray}\label{cau3}
\lefteqn{\int\limits_{\{y\in\iphil:u^{s}(x',y)=t\}}\frac{\vert\nabla u^{s}\vert^2}{\vert\partial_{y}
u^{s}\vert}\:d\HM^{0}(y)}\notag\\
&\leq&
\sum_{i=1}^{d-1}\int\limits_{\{y\in\iphil:u(x',y)=t\}}\frac{\vert\partial_{i}u\vert^2}
{\vert\partial_{y}u\vert}\:d\HM^{0}(y)
+
\int\limits_{\{y\in\iphil:u(x',y)=t\}}\frac{\vert\partial_{y}u\vert^2}{\vert\partial_{y}u\vert}
\:d\HM^{0}(y)\notag\\
&=&
\int\limits_{\{y\in\iphil:u(x',y)=t\}}\frac{\vert\nabla u\vert^2}{\vert\partial_{y}u\vert}
\:d\HM^{0}(y).
\end{eqnarray}
Integrating \eqref{cau3} with respect to $t$ and using the one dimensional
Coarea Formula again, we see that the condition \eqref{inteq} implies  equality in \eqref{cau3}
for almost all $x'\in\tphil'$ and hence in all four
inequalities \eqref{cau1}, \eqref{appcf}, \eqref{appiso} and \eqref{cau2}
for almost all $x'\in\tphil'$ and  $\M^1$-a.e. $t\in (m(x'),M(x'))$ (which because of \eqref{mM} is nonempty).

We now augment $N$ by a set of $\M^{d-1}$ measure zero so that equality holds in
\eqref{cau1}-\eqref{cau2} for all $x'\in\tphil'\setminus N$.

\underline{Step 3.}[Using Step 2 to deduce 'bump structure' of $u(x',\cdot)$ and define $b$.]
We begin by observing that equality in \eqref{appiso} for almost all $x'\in\tphil'$ and  $\M^1$-a.e.
$t\in (m(x'),M(x'))$ improves to
equality in \eqref{appiso} for all $x'\in\tphil'$ and all $t\in (m(x'),M(x'))$, using continuity of $u$ (and arguing
by contradiction, for instance).

We will now describe the structure of $\{y\in\iphil\colon u(x',y)>t\}$. For fixed $x'\in\tphil'$
and $t\in(m(x'),M(x'))$, equality in \eqref{appiso} implies that the set $\{y\in\iphil\colon u(x',y)>t\}$ is
equal to an
open interval or its complement.
In other words, defining
\begin{align}
y_1(x',t):=\sup &\left\{ y\in \iphil\colon u(x',y)=t\text{ and there exists }\alpha\in(-\ell,y)\notag\right.\\
&\left.\quad \text{ such that }u(\alpha)<t\text{ and }u\text{ is nondecreasing on }(\alpha,y)\right\}\label{y1}\\
y_2(x',t):=\inf & \left\{y\in\iphil\colon u(x',y)=t\text{ and there exists }\alpha\in (-\ell,y)\notag\right.\\
&\left.\quad \text{ such that }u(\alpha)>t\text{ and }u\text{ is nonincreasing on }
(\alpha,y)\right\},\label{y2}
\end{align}
we have that
\begin{equation}
\begin{split}
\{y\in\iphil: u(x',y)>t\}&=(y_1(x',t), y_2(x',t))\\
\text{ or }\qquad \{y\in\iphil: u(x',y)>t\}&=[-\ell, y_2(x',t))\cup (y_1(x',t),\ell).
\end{split}\label{cau4}
\end{equation}
In particular, up to an $x'$-dependent shift, the graph of $u(x',\cdot)$ has the form of a 'bump': It is nondecreasing on $(-\ell,\alpha)$ and nonincreasing on $(\alpha,\ell)$ for some $\alpha\in \iphil$.

Using the above definitions of $y_1$ and $y_2$, we define
\begin{eqnarray}\label{bdef}
b(x',t):=\frac{1}{2}\left(y_1(x',t)+y_2(x',t)\right)
\end{eqnarray}
and observe that $b(\cdot,t)$ is a measurable function for each $t\in (m(x'), M(x'))$.

\underline{Step 4.}[The function $b$ is independent of $t$.]
We first consider $x'\in\tphil'\setminus N$.
As observed above,  equality  in \eqref{cau1} and \eqref{cau2} implies the existence of functions $c_{i}
(x',t)$ and $c(x',t)\geq 0$ (which do not depend on $y$), such that
\begin{eqnarray}
\label{cau5} \partial_{i}u(x',y_1(x',t)) &=&\partial_{i}u(x',y_2(x',t))=c_{i}(x',t),\qquad i=1,\ldots,d-1,
\\
\label{cau6} \vert\partial_{y}u(x',y_1(x',t))\vert &=&\vert\partial_{y}u(x',y_2(x',t))\vert=c(x',t)
\end{eqnarray}
for all $x'\in\tphil'\setminus N$ and for almost all $t\in (m(x'), M(x'))$.
In particular we have
\begin{eqnarray}\label{cau8}
\vert\nabla u(x',y_1)\vert=\vert\nabla u(x',y_2)\vert.
\end{eqnarray}
Additionally, using the definition of $y_j$ as the endpoints of the set $\{y\in\iphil: u(x',y)>t\}$, we improve from \eqref{cau6} to
\begin{eqnarray}\label{cau7}
\partial_{y}u(x',y_1(x',t))=-\partial_{y}u(x',y_2(x',t))
\end{eqnarray}
for all $x'\in\tphil'\setminus N$ and for almost all $t\in (m(x'), M(x'))$.

To fix ideas and simplify notation, we find it convenient to shift so that the first case in \eqref{cau4} holds. Hence let
\begin{align*}
&\alpha_1^+(x'):=\underset{t\uparrow M(x')}\lim y_1(x',t),\quad\alpha_2^+
(x'):=\underset{t\uparrow M(x')}\lim y_2(x',t),\\
&\alpha_1^-(x'):=\underset{t\downarrow m(x')}\lim y_1(x',t),\quad\alpha_2^-
(x'):=\underset{t\downarrow m(x')}\lim y_2(x',t)
\end{align*}
and consider the interval $\iphil\to(\alpha_1^-,\alpha_1^-+2\ell)$ so that $y_2>y_1$.
For $j=1,2$ we define the intervals
\begin{align*}
  I_j:=(\alpha_j^-,\alpha_j^+)
\end{align*}
and the corresponding distribution functions
\begin{eqnarray*}
\mu_j(x',t)&=\M^1\left(\left\{y\in I_j: t<u(x',t)<M(x')\right\}\right).
\end{eqnarray*}
Because \eqref{keyass3} holds on $\tphil'\setminus N$, these distribution functions can be written
 as
\begin{eqnarray*}
\mu_{j}(x',t)
&=&
\int\limits_{t}^{M(x')}\int\limits_{\{y\in I_{j}:u(x',y)=s\}}\frac{1}{\vert\partial_{y}u(x',y)\vert}
\:d\HM^0(y)\:ds\\
&=&
\int\limits_{t}^{M(x')}\frac{1}{\vert\partial_{y}u(x',y_{j}(x',s))\vert}\:ds\qquad\hbox{for}\:j=1,2,
\end{eqnarray*}
where we have for the second equality applied \eqref{cau4}.
Using this integral representation together with \eqref{cau6}, we conclude $\mu_{1}(x',t)=\mu_{2}(x',t)$ for all $t\in (m(x'),M(x'))$. Since
\begin{eqnarray*}
y_1(x',t)=\alpha_1^+(x')-\mu_1(x',t)\qquad\hbox{and}\qquad y_2(x',t)=\alpha_2^+(x')+\mu_2(x',t),
\end{eqnarray*}
we obtain as desired
\begin{eqnarray}
b(x',t)\overset{\eqref{bdef}}{=}\frac{y_1(x',t)+y_2(x',t)}{2}=\frac{\alpha_1(x')+\alpha_2(x')}
{2}\qquad\text{for all }t\in (m(x'),M(x')).\label{b*}
\end{eqnarray}

We now consider $x_0'\in N$.
The theorem of Sard in one dimension implies  for $\M^1$-a.e. $t\in (m(x_0'),M(x_0'))$ that
\begin{eqnarray*}
\partial_{y}u(x'_0,y_{j}(x'_0,t))\ne 0.
\end{eqnarray*}
Hence for any such $t$, we deduce from the Implicit Function Theorem that there exists an open set $U'=U'(x_0')$ such that the functions $y_j$ are $C^1$ in $x'$ on $U'$.
But then for any such $t$ and any sequence $x_n'\to x_0'$ with $x_n'\in \tphil'\setminus N$, there holds
\begin{align*}
b(x_0',t)=  \frac{y_1(x_0',t)+y_2(x_0',t)}{2}=\lim_{n\to\infty}\frac{y_1(x_n',t)+y_2(x_n',t)}{2}.
\end{align*}
Since the right-hand side is, according to \eqref{b*}, independent of $t$, so too is the left-hand side.
Finally using continuity of $u(x_0',\cdot)$ and the definitions of $y_1$ and $y_2$, we deduce from this equality for almost all $t$ that in fact $b(x_0',\cdot)$ is constant for all $t\in (m(x_0'),M(x_0'))$.

\underline{Step 5.}[The function $b$ is in $W^{1,1}$.]
We now establish $W^{1,1}$ regularity of $b$. Fix any $x_0'\in \tphil'$ and any $t_0\in(m(x_0'),M(x_0'))$ such that
\begin{eqnarray}
\partial_{y}u(x'_0,y_{j}(x'_0,t_0))\ne 0,\qquad j=1,\,2.\label{nec}
\end{eqnarray}
By shifting as in Step 2, we may without loss of generality assume that
\begin{align*}
y_1(x_0',t_0)< y_2(x_0',t_0).
\end{align*}
Moreover, continuity of $u$ and the $y_j$ (from the Implicit Function Theorem, as above) implies that $t_0\in (m(x'),M(x'))$ and this \emph{single shift} delivers
\begin{align}
y_1(x',t_0)<y_2(x',t_0)\label{y1less}
\end{align}
for all $x'$ in
a neighborhood $U'$ of $x_0'$.

Because of \eqref{y1less}, we have
the representation
\begin{eqnarray*}
b(x')=\frac{\int\limits_{\{y\in \iphil:u(x',y)>t_0\}}y\:dy}{\mu_{u}(x',t_0)}
\end{eqnarray*}
and $\mu(x',t_0)=y_2(x',t_0)-y_1(x',t_0)$. By choosing a $d-1$-dimensional ball $B_\rho(x_0')\Subset U'$ we may moreover assume
$\mu(x',t_0)\geq c$ on $B_\rho(x_0')$ for some $c>0$. Let
\begin{eqnarray*}
h(x'):=\int\limits_{\{y\in \iphil:u(x',y)>t_0\}}y\:dy.
\end{eqnarray*}
We will show $h\in W^{1,1}(B_\rho(x_0'))$. Let $\varphi\in C^1_{0}(B_\rho(x_0'))$.
Then with the same computations leading to \cite[formula (4.46)]{CF} and the additional
information (from the Implicit Function Theorem) that $\partial \{(x',y)\in\tphil\colon x'\in B_{\rho}(x_0')\;\text{and}\; u(x',y)>t_0\}$ is smooth, we get for $i=1,\hdots,d-1$ that
\begin{eqnarray*}
\int\limits_{B_{\rho}(x_0')}\varphi(x')\: d(\partial_{i}h(x'))
&=&
-\int\limits_{B_{\rho}(x_0')}\partial_{i}\varphi(x')\int\limits_{\iphil}\chi_{\{y\in \iphil:u(x',y)>t_0\}}(y)\:y\:dy\:dx'\\
&=&
-\int\limits_{B_{\rho}(x'_0)}\int\limits_{y_1(x',t_0)}^{y_2(x',t_0)}\partial_{i}\varphi(x')\:y\:dy\:dx'\\
&=&
-\int\limits_{B_{\rho}(x'_0)}\varphi(x')\:y\left(\partial_{i}y_2(x',t_0)-\partial_{i}y_1(x',t_0)\right)\:dx'.
\end{eqnarray*}
Then we conclude as in \cite{CF} (see Lemma 4.1 and Lemma 4.10): For any Borel set $B\Subset B_\rho(x_0')$ with $\M^{d-1}(B)=0$, there holds
\begin{eqnarray*}
\vert\partial_{i}h\vert(B)
\leq
\int\limits_{B_{\rho}(x'_0)\cap B}\:\vert y\vert\vert\partial_{i}y_2(x',t_0)-\partial_{i}y_1(x',t_0)\vert\:dx'=0.
\end{eqnarray*}
Thus $\partial_{i}h$ is absolutely continuous with respect to the $d-1$ dimensional Lebesgue
measure and $h\in W^{1,1}(B_\rho(x_0'))$.
A covering argument then gives $h\in W^{1,1}(\tphil')$. The analogous argument gives
$\mu_{u}(\cdot,t_0)\in W^{1,1}(\tphil')$ and
hence $b\in W^{1,1}(\tphil')$.

\underline{Step 6.}[The function $b$ does not depend on $x'$.]
We finally show that $b$ is constant. According to the previous step it suffices to show that $\partial_i b(x_0')=0$ for all $x_0'\in\tphil'\setminus N$ and all $i=1,\ldots,d$.

Fix any $x_0'\in\tphil'\setminus N$ and
$t_0\in (m(x_0'),M(x_0'))$ such that \eqref{nec} holds.

Similarly to in the previous step, we restrict to a small ball $B_\rho(x_0')$ such that $t_0\in (m(x'),M(x'))$ for all $x'\in B_\rho(x_0')$ so that $y_1(x',t_0)$, $y_2(x',t_0)$ are well-defined and $C^1$ on $B_\rho(x_0')$. For reference below we record the identity
there holds
\begin{eqnarray}\label{ift}
u(x',y_{j}(x',t_0))=t_0 \qquad\hbox{for}\:x'\in B_{\rho}(x'_0),\quad j=1,2.
\end{eqnarray}
We remark in addition that smoothness of $u$ and $y_j$ implies that the relations \eqref{cau5} and \eqref{cau7} hold for all $x'\in B_\rho(x_0')$ and in particular for $t=t_0$.
Furthermore we decrease $\rho>0$ if necessary so that
\begin{eqnarray}
\partial_{y}u(x',y_{j}(x',t_0))\ne 0,\qquad x'\in B_\rho(x_0'),\quad j=1,\,2.\label{nec2}
\end{eqnarray}

First we deduce from \eqref{ift} for $j=1,\, 2$ that
\begin{eqnarray*}
0=\frac{d}{dx'_{i}}u(x',y_{j}(x',t_0))
=
\partial_{i}u(x',y_{j}(x',t_0))+\partial_{y}u(x',y_{j}(x',t_0))\:\partial_{i}y_{j}(x',t_0).
\end{eqnarray*}
Using this together with \eqref{cau5} and \eqref{cau7} for $x'\in B_\rho(x_0')$ and $t=t_0$, we obtain
\begin{eqnarray*}
\partial_{y}u(x',y_{1}(x',t_0))\left(\partial_{i}y_{1}(x',t_0)+\partial_{i}y_{2}(x',t_0)\right)=0
\end{eqnarray*}
Recalling~\eqref{nec2}, we deduce
\begin{eqnarray*}
0=\partial_{i}y_{1}(x',t_0)+\partial_{i}y_{2}(x',t_0)=2\partial_{i}b(x')
\end{eqnarray*}
as desired.
\end{proof}
\section{Steiner Symmetrization applied to the Cahn-Hilliard problem}\label{S:steinerCH}
We now use Theorem~\ref{steinerthm} to give a first proof of Theorem~\ref{t:ssym} for minimizers of $\E$ over $\X$. We note for reference below that such minimizers are smooth and satisfy the Euler Lagrange equation
\begin{align}\label{el}
-\Delta u+f(u)=0\qquad\hbox{in}\:\tphil,
\end{align}
where
\begin{align}\label{elf}
f(u)=\frac{1}{\phi^2}G'(u)+\frac{1}{\phi}\left(\lambda_{\phi}+\lambda_{\omega}\zeta'(u)\right)
\end{align}
and $\lambda_\phi,\,\lambda_\omega$ are Lagrange parameters corresponding to the constraints.
Consequently for any minimizer $u\in \X$ and index $i\in\{1,\ldots ,d\}$, the partial derivative $\partial_{i}u$
satisfies the linear equation
\begin{align}
-\Delta(\partial_{i}u)+f'(u)(\partial_{i}u)=0\qquad\hbox{in}\quad\tphil.\label{ieq}
\end{align}

We will also utilize the following Strong Maximum Principle, due to Serrin \cite[Theorem 2.10]{HL}.
\begin{theorem}\label{t:strongmax}
Let $\Omega\subseteq\R$ be an open, bounded domain.
Suppose $u \in C^2(\Omega) \cap C(\bar{\Omega})$ satisfies $-\Delta u+c(x)u = 0$ in $\Omega$, where $c\in C(\bar{\Omega})$. If $u \leq 0$ in $\Omega$, then either
$u< 0$ in $\Omega$ or $u \equiv 0$ in $\bar{\Omega}$.
\end{theorem}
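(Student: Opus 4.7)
The plan is to reduce the equation to one with a nonnegative zero-order coefficient and then invoke the classical strong maximum principle via the Hopf boundary point lemma.

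First, I would split $c(x) = c^+(x) - c^-(x)$ into its positive and negative parts. Since $u \le 0$ in $\Omega$ and $c^- \ge 0$, one has $c^-(x)\,u(x) \le 0$, and the equation $-\Delta u + c(x) u = 0$ rewrites as
\[
-\Delta u + c^+(x)\,u \;=\; c^-(x)\,u \;\le\; 0 \qquad\text{in } \Omega,
\]
so that $u$ is a $C^2$ subsolution of the operator $L := -\Delta + c^+$, whose zero-order coefficient is nonnegative and continuous.

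Next, I would argue by dichotomy on $M := \sup_{\Omega} u$. Since $u \le 0$, we have $M \le 0$. If $u(x) < 0$ for every $x\in \Omega$, we are done. Otherwise there exists $x_0 \in \Omega$ with $u(x_0) = 0$, and $x_0$ is then an interior maximum with nonnegative value. I would then apply the classical strong maximum principle for $L$ with $c^+ \ge 0$: a $C^2$ subsolution attaining a nonnegative maximum in the interior must be constant. The standard way to do this is to assume by contradiction that the closed set $S := \{x \in \Omega : u(x) = 0\}$ is a proper subset of $\Omega$; one then picks a ball $B_r(z) \subset \Omega \setminus S$ whose closure touches $S$ at some point $x_* \in \partial B_r(z) \cap S$, and applies the Hopf boundary point lemma to the nonpositive subsolution $u$ on $B_r(z)$. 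This forces the outward radial derivative $\partial_\nu u(x_*) > 0$, contradicting the fact that $x_*$ is an interior maximum of $u \in C^2(\Omega)$ and hence $\nabla u(x_*) = 0$. Therefore $S = \Omega$, and by continuity $u \equiv 0$ on $\overline{\Omega}$.

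The main obstacle lies in the Hopf lemma itself, which requires constructing an explicit barrier of the form $w(x) = e^{-\alpha |x - z|^2} - e^{-\alpha r^2}$ on an annulus $B_r(z) \setminus \overline{B_{r/2}(z)}$ and choosing $\alpha$ large enough that $L w = -\Delta w + c^+ w \le 0$ throughout the annulus; this estimate needs a uniform bound on $c^+$ on $\overline{B_r(z)}$, which is supplied by the continuity of $c$. Comparing a small multiple $\varepsilon w$ with $u$ on the boundary of the annulus via the weak comparison principle for $L$ (valid precisely because $c^+ \ge 0$) yields $u \le \varepsilon w$ inside, and differentiating at $x_*$ gives the strict sign of the normal derivative. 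Modulo this classical barrier construction, the reduction to $c^+ \ge 0$ via the sign of $u$ is the only nonstandard ingredient needed for the version stated here.
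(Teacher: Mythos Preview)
Your argument is correct and follows the standard route: reduce to a nonnegative zero-order coefficient by absorbing $c^-u\le 0$ into the right-hand side, then run the classical Hopf-lemma contradiction at a boundary point of the zero set. The paper itself does not supply a proof of this theorem; it is quoted as a known result attributed to Serrin, with a reference to \cite[Theorem 2.10]{HL}. So there is no ``paper's own proof'' to compare against---your write-up essentially reconstructs the textbook argument that the citation points to.
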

\subsection{Proof of Theorem \ref{t:ssym} via Steiner symmetrization}
We will now show that Hypothesis \ref{hyp:1} is satisfied by constrained minimizers
$u\in X_{\phi,\omega}$  of the Cahn-Hilliard energy $\E$. We denote by $u^s$
the Steiner symmetrized solution with respect to the $d$ - coordinate and set $y=x_d$.
By  \eqref{rearr2} we have $u^{s}\in\X$ as well. Moreover \eqref{rearr1} and \eqref{rearr2} give
\begin{eqnarray*}
\E(u^s)\leq \E(u).
\end{eqnarray*}
Thus $u^s$ is also a constrained minimizer of $\E$ and satisfies
 \eqref{el}-\eqref{elf} (possibly for different Lagrange parameters $\lambda_{\phi}^s$ and
$\lambda_{\omega}^s$ than for $u$). Note that Remark \ref{rem:prop} gives
\begin{eqnarray*}
\partial_{y}u^s(x',y)\leq 0\quad\hbox{in}\quad\tphil'\times(0,\ell)\qquad\hbox{and}\qquad
\partial_{y}u^s(x',y)\geq 0\quad\hbox{in}\quad\tphil'\times(-\ell,0).
\end{eqnarray*} Clearly \eqref{ieq} holds for $\partial_{y}u^s$ as well. Consequently Theorem \ref{t:strongmax} gives
\begin{eqnarray*}
\partial_{y}u^s(x',y)< 0\quad\hbox{or}\quad \partial_{y}u^s(x',y)\equiv 0\quad\hbox{in}\quad\tphil'\times(0,\ell),
\end{eqnarray*}
with the analogous statement for $\tphil'\times(-\ell,0)$. The case of equality can be excluded, since in
Theorem 1.19 in \cite{GWW} it was shown that for $\phi$ small there exists a sharp-interface profile
$\Psi:\tphil\to\{-1,1\}$ such that $\Psi=+1$ in a ball and $\Psi=-1$ on the complement and such that  the volume constrained minimizer satisfies
\begin{eqnarray*}
\| u-\Psi\|_{L^2}\ll 1.
\end{eqnarray*}
Since Steiner symmetrization is nonexpansive (see e.g. \cite{K} Section II.2) we also get
\begin{eqnarray*}
\| u^s-\Psi\|_{L^2}\leq\| u-\Psi\|_{L^2}\ll 1.
\end{eqnarray*}
This rules out the case $\partial_{y}u^s(x',y)\equiv 0$. Consequently
\begin{eqnarray*}
m^s(x'):=\min\{u^s(x',y):y\in\iphil\}<\max\{u^s(x',y):y\in\iphil\}=:M^s(x').
\end{eqnarray*}
Since rearrangements preserve the maximum and minimum of a function, this implies
$m(x')<M(x')$ and thus \eqref{mM} holds. Since $u^s$ is strictly decreasing on $(0,\ell)$
(and increasing on $(-\ell,0)$) Lemma \ref{equimeas} implies \eqref{keyass3} and thus
also \eqref{keyass1}. Hence Hypothesis \ref{hyp:1} is satisfied.

Consequently we can immediately deduce Theorem \ref{t:ssym} from Theorem \ref{steinerthm}.
\subsection{Sphericity of constrained minimizers in $d=2$}\label{ss:bonn}
Using the connectedness of the superlevel sets from Theorem \ref{steinerthm} together with the 
Bonnesen inequality, we obtain a quantitative estimate on the sphericity of the superlevel sets of 
constrained minimizers in dimension $d=2$. Loosely speaking, we can show that for any constrained 
minimizer $\uo$, the superlevel sets $\{\uo >\eta\}$ for $\eta \in (-1,1)$  cannot possess ``tentacles'' and 
are therefore close to a ball in the sense of Hausdorff distance, improving the sphericity estimate in terms 
of the Fraenkel asymmetry from \cite{GWW}. Hence the possibility of mass drifting off to infinity as 
$\phi\downarrow 0$ is precluded. The main tool that is needed in order to establish this fact is the 
Bonnesen inequality, which we state below after recalling the definition of the outer and inner radius.
\begin{definition}
Consider a simply connected domain $A\subset\mathbb{R}^2$. The outer radius of $A$, denoted 
$\rho_{out}(A)$, is defined as the infimum of the radii of all the disks in $\mathbb{R}^2$ that contain $A$. 
Similarly, the inner radius of $A$, denoted $\rho_{in}(A)$, is defined as the supremum of the radii of all 
the disks in $\mathbb{R}^2$ that are contained in $A$. Lastly, we define the volume radius of $A$, 
denoted $\rho(A)$, as the radius of a disk in $\mathbb{R}^2$ whose measure is equal to that of $A$.
\end{definition}
\begin{remark}
We may use the same definition for the inner and outer radius of a simply connected domain $A\subset\tphil$, provided that there exists a disk in $\tphil$ that contains $A$. Note that in that case there holds $\rho_{out}(A)<(\phi L)/2$ and $\per_{\tphil}(A)=\per_{\mathbb{R}^2}(A)$.
\end{remark}
The classical Bonnesen inequality in the plane is as follows.
\begin{theorem}[Bonnesen inequality in $\mathbb{R}^2$]\label{Bonnesen.Ineq}
For any simply connected domain $A\subset\mathbb{R}^2$ with smooth boundary, there holds
\begin{equation}\label{Bonnesen.Ineq2}
\per_{\mathbb{R}^2}(A)\geq \sqrt{\pi}\left(4|A|+\Big(\rho_{out}(A)-\rho_{in}(A)\Big)^2 \right)^{1/2}.
\end{equation}
\end{theorem}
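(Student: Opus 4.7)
The plan is to derive Theorem \ref{Bonnesen.Ineq} from the classical Bonnesen polynomial inequality. The key auxiliary claim is that for every $t$ with $\rho_{in}(A) \leq t \leq \rho_{out}(A)$ one has
$$P(t) \,:=\, \pi t^2 - \per_{\mathbb{R}^2}(A)\, t + |A| \,\leq\, 0.$$
Granting this, the theorem follows quickly: since $P$ takes nonpositive values on the nonempty interval $[\rho_{in}(A),\rho_{out}(A)]$, its discriminant is nonnegative and its two real roots $t_\pm$ satisfy $t_- \leq \rho_{in}(A) \leq \rho_{out}(A) \leq t_+$. Hence
$$\rho_{out}(A) - \rho_{in}(A) \,\leq\, t_+ - t_- \,=\, \frac{1}{\pi}\sqrt{\per_{\mathbb{R}^2}(A)^2 - 4\pi |A|},$$
which rearranges to $\per_{\mathbb{R}^2}(A)^2 \geq 4\pi |A| + \pi^2(\rho_{out}(A) - \rho_{in}(A))^2$. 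Since $\pi^2 \geq \pi$, this implies \eqref{Bonnesen.Ineq2} after taking square roots and pulling $\sqrt{\pi}$ outside.

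To establish $P(t) \leq 0$ on $[\rho_{in}(A), \rho_{out}(A)]$, I would use an integral-geometric argument. For such a $t$, the defining properties of $\rho_{in}$ and $\rho_{out}$ guarantee that no translate of the closed disk $\overline{B_t(z)}$ is contained in $A$ and no such translate contains $A$. Hence for every translation $z$ in the set of parameters where $B_t(z)$ is ``engaged'' with $A$ (i.e.\ $B_t(z)$ meets both $A$ and its complement in sets of positive measure), the circle $\partial B_t(z)$ must cross $\partial A$ in at least two points. Integrating this crossing-number lower bound against Lebesgue measure on $z$ and applying the Poincar\'e kinematic formula
$$\int_{\mathbb{R}^2} \#\bigl(\partial B_t(z) \cap \partial A\bigr)\, dz \,=\, 4\,t\,\per_{\mathbb{R}^2}(A),$$
together with an area identity comparing $|A + \overline{B_t(z)}|$-type quantities to $|A|$ and $\pi t^2$, one extracts $P(t) \leq 0$ after rearrangement.

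The main obstacle is carrying out the kinematic step cleanly for a general (nonconvex) simply connected domain with smooth boundary. When $A$ is convex, $P(t) \leq 0$ follows essentially immediately from Steiner's formula $|A + \overline{B_t}| = |A| + t\,\per_{\mathbb{R}^2}(A) + \pi t^2$ together with the observation that for $t \in [\rho_{in},\rho_{out}]$ the inner parallel set is empty. In the nonconvex case one either passes to the convex hull (tracking how $\rho_{in}$, $\rho_{out}$, $|A|$, and $\per_{\mathbb{R}^2}(A)$ transform so that the resulting inequality is preserved) or argues directly via the Poincar\'e formula, which remains valid under the assumed $C^1$ regularity of $\partial A$. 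Since the theorem is invoked only as a black-box tool in Subsection~\ref{ss:bonn}, citing Bonnesen's original work is a reasonable alternative to a self-contained proof.
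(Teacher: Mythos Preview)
The paper does not supply a proof of Theorem~\ref{Bonnesen.Ineq}; it is quoted as the classical Bonnesen inequality and invoked only as a black-box input in the proof of Proposition~\ref{Asym.stronger}. Your outline via the Bonnesen polynomial $P(t)=\pi t^{2}-\per_{\mathbb{R}^2}(A)\,t+|A|$ and the Poincar\'e kinematic formula is the standard route to the result, and your observation that the classical form actually gives the sharper conclusion $\per_{\mathbb{R}^2}(A)^{2}\geq 4\pi|A|+\pi^{2}(\rho_{out}-\rho_{in})^{2}$ (with $\pi^{2}$ rather than $\pi$) is correct; the paper's stated version is simply a weakening of this.

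Your caveat about the nonconvex case is well placed. The convex-hull reduction you mention does not go through cleanly: while $\per_{\mathbb{R}^2}(A)\geq\per_{\mathbb{R}^2}(\mathrm{conv}\,A)$ and $|A|\leq|\mathrm{conv}\,A|$ both move favorably, the inradius can strictly increase under taking the convex hull, so $\rho_{out}-\rho_{in}$ can decrease, and the two inequalities do not compare termwise. The direct integral-geometric argument (or a citation to Bonnesen's original work or Osserman's survey on Bonnesen-style inequalities) is the honest way to cover the simply connected, nonconvex case, exactly as you suggest in your final sentence.
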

An application of the Bonnesen inequality to our problem yields the following result for constrained minimizers in the parameter regime \eqref{regime} for $\xi \in (\tilde{\xi_2},\xi_2]$, where the endpoints are given by
\begin{align}\label{xi.sub.d}
\tilde{\xi}_2:=\frac{3(c_0^2\pi)^{1/3}}{2^{5/3}},\quad \xi_2:=\sqrt{2}\tilde{\xi}_2
\end{align}
and
\begin{align*}
  c_0=\int_{-1}^{1}\sqrt{2G(s)}\,ds
\end{align*}
(which is $2\sqrt{2}/3$ for the standard potential $G(s)=(1-s^2)^2/4$). We refer to \cite{GWW} for the derivation and significance of $\tilde{\xi}_2$ and $\xi_2$.
\begin{proposition}\label{Asym.stronger}
Consider the critical regime \eqref{regime} with $\xi \in (\tilde{\xi_2},\xi_2]$.
Fix any $\omega_1>0$ and consider $\phi>0$ sufficiently small. For any volume-constrained minimizer $\uo$ with volume $\omega\in[\omega_1,\xi^3/2]$ and any $\eta\in(-1,1)$, there holds
\begin{equation}\label{str.0}
\rho_{out}(\{\uo >\eta \})=r_\omega+\frac{O(\phi^{1/6})}{(1+\eta)},
\end{equation}
and
\begin{equation}\label{str.0.1}
\rho_{in}(\{\uo >\eta \})=r_\omega+\frac{O(\phi^{1/6})}{(1-\eta)},
\end{equation}
where
\begin{equation}\label{str.0.2}
r_\omega:=\sqrt{\frac{\omega}{\pi}}.
\end{equation}
Consequently, there holds
\begin{equation}\label{str.0.3}
\rho_{out}(\{\uo >\eta \})-\rho_{in}(\{\uo >\eta \})=\frac{O(\phi^{1/6})}{(1-\eta^2)}.
\end{equation}
\end{proposition}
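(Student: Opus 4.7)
The plan is to combine the simple connectedness of the superlevel sets, which follows from Theorem~\ref{t:ssym}, with the quantitative closeness of $\uo$ to a sharp-interface profile established in \cite{GWW}, and then to deploy the Bonnesen inequality~\eqref{Bonnesen.Ineq2}.

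I would first observe, using Theorem~\ref{t:ssym} together with Remark~\ref{rem:prop}(iii), that up to a translation the superlevel set $A_\eta:=\{\uo>\eta\}$ is simply connected and starshaped with respect to the origin for every $\eta\in(-1,1)$. From \cite{GWW} I would import the quantitative $L^2$-closeness $\|\uo-\Psi\|_{L^2}^2\le C\phi^{1/3}$, with $\Psi$ equal to $+1$ on a ball $B_{r_\omega}$ and $-1$ outside, together with a perimeter estimate $\per(A_\eta)=2\pi r_\omega+O(\phi^{1/3})$ (in an averaged sense over $\eta$ via coarea and AM--GM applied to $\E(\uo)$, upgraded pointwise using the smoothness of $\uo$ and the strict radial monotonicity from Section~\ref{S:GNN}). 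The $L^2$-closeness places $A_\eta$ inside a disk in $\tphil$, so $\per_{\tphil}(A_\eta)=\per_{\mathbb{R}^2}(A_\eta)$ and~\eqref{Bonnesen.Ineq2} applies.

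Second, I would derive asymmetric one-sided area bounds via Chebyshev: since $|\uo-\Psi|\ge 1+\eta$ on $A_\eta\setminus B_{r_\omega}$ and $|\uo-\Psi|\ge 1-\eta$ on $B_{r_\omega}\setminus A_\eta$, the $L^2$-bound yields
\begin{align*}
|A_\eta\setminus B_{r_\omega}|\le \frac{C\phi^{1/3}}{(1+\eta)^2},\qquad |B_{r_\omega}\setminus A_\eta|\le \frac{C\phi^{1/3}}{(1-\eta)^2}.
\end{align*}
To obtain the asymmetric upper bound on $\rho_{out}(A_\eta)$ I would exploit the starshaped structure: the outer radius is realized along a ray from the origin, and any excursion of $A_\eta$ to radius $r_\omega+\delta$ must, by continuity of $\uo$ and the transition-layer structure of the Cahn--Hilliard minimizer, carry with it a cone of $A_\eta\setminus B_{r_\omega}$ whose area is bounded below in terms of $\delta$; matching with the first Chebyshev estimate above gives $\rho_{out}(A_\eta)-r_\omega=O(\phi^{1/6})/(1+\eta)$. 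The dual argument applied to holes of $A_\eta$ inside $B_{r_\omega}$ yields $r_\omega-\rho_{in}(A_\eta)=O(\phi^{1/6})/(1-\eta)$.

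The complementary inequalities (a lower bound on $\rho_{out}$ and an upper bound on $\rho_{in}$) follow from the sandwich $\rho_{in}(A_\eta)\le\sqrt{|A_\eta|/\pi}\le\rho_{out}(A_\eta)$ together with the area estimate $|A_\eta|=\omega+O(\phi^{1/3}/(1-\eta^2)^2)$, which is a much smaller correction. Estimate~\eqref{str.0.3} is then immediate from~\eqref{str.0} and~\eqref{str.0.1} by addition (and can be cross-checked via Bonnesen, which directly delivers the weaker symmetric bound $\rho_{out}-\rho_{in}\le\sqrt{\per(A_\eta)^2/\pi-4|A_\eta|}=O(\phi^{1/6})$). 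In my view the main obstacle is making the geometric ``excursion'' step rigorous with the sharp $(1\pm\eta)$ weight: starshapedness alone does not bound the angular width of a filament, so one must additionally invoke the smoothness of $\uo$ and the strict radial monotonicity coming from the Steiner symmetry combined with Theorem~\ref{t:strongmax} applied to~\eqref{ieq}, in order to quantitatively control the angular extent of any excursion crossing level $\eta$.
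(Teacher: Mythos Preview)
Your outline contains a genuine gap, which you yourself flag: the ``excursion'' step cannot be made to yield the sharp $(1\pm\eta)^{-1}$ weights. Starshapedness only tells you that if $(r,\theta)\in A_\eta$ then the full radial segment lies in $A_\eta$; it gives no lower bound on the angular width of a filament reaching radius $r_\omega+\delta$. Hence the Chebyshev area bound $|A_\eta\setminus B_{r_\omega}|\le C\phi^{1/3}/(1+\eta)^2$ does not translate into a bound on $\delta$, and the extra ingredients you propose (smoothness, strict monotonicity from Theorem~\ref{t:strongmax}) are qualitative and do not quantify the angular extent either. This is precisely the ``tentacle'' phenomenon the proposition is designed to rule out, so the argument is circular at that point.

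The paper's proof sidesteps the excursion issue entirely by applying Bonnesen not at the single level $\eta$ but at \emph{every} level $t$ and then integrating. From \cite{GWW} one has the integrated perimeter--deficit bound
\[
\int_{-1+\phi^{1/3}}^{1-\phi^{1/3}}\sqrt{2\tilde G(t)}\bigl(\per(\{\uo>t\})-{\rm P}_E(\{\uo>t\})\bigr)\,dt\lesssim\phi^{1/3},
\]
together with $|\{\uo>t\}|=\omega+O(\phi^{1/3})$ uniformly in $t$. Bonnesen at each level gives $(\Delta\rho(t))^2\lesssim$ perimeter deficit, so the integrated bound becomes $\int\sqrt{\tilde G(t)}\,(\Delta\rho(t))^2\,dt\lesssim\phi^{1/3}$. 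The asymmetric denominators then fall out of the \emph{monotonicity of nested superlevel sets}: for $t\le\eta$ one has $\rho_{out}(t)\ge\rho_{out}(\eta)$ while $\rho_{in}(t)\le r_\omega+O(\phi^{1/6})$, so $\Delta\rho(t)\ge\rho_{out}(\eta)-r_\omega+O(\phi^{1/6})$ on the whole interval $[-1+2\phi^{1/3},\eta]$. Integrating $\sqrt{\tilde G(t)}$ over this interval produces a factor $\sim(1+\eta)^2$, which is exactly what yields \eqref{str.0}; the argument for $\rho_{in}$ is symmetric on $[\eta,1-2\phi^{1/3}]$. No control of angular width is ever needed.
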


\begin{proof}[Proof of proposition \ref{Asym.stronger}]
According to Theorem~\ref{steinerthm}, $\uo$ is smooth and equal to its Steiner symmetrization. We also recall two facts from \cite{GWW}:
\begin{align}
&\int_{-1+\phi^{1/3}}^{1-\phi^{1/3}}\sqrt{2\tilde{G}(t)}\,\Big({\rm Per}_{\tphil}(\{\uo>t\})-{\rm P}_E(\{\uo>t\})\Big)\,dt\lesssim \phi^{1/3},\label{JJ}\\
\text{and }\quad&|\{u>t\}|=\omega+O(\phi^{1/3})\;\;\text{for all }t\in [-1+\phi^{1/3},1-\phi^{1/3}],\label{size}
\end{align}
where in \eqref{JJ} ${\rm Per}_{\tphil}$ represents the perimeter and ${\rm P}_E$ represents the perimeter of a ball with the same volume.
Next we claim that we may shift $\uo$ so that $\{\uo>-1+2\phi^{1/3}\}$ is contained within a disk centered at the origin and of radius less than $(\phi L)/2$. Indeed, if this is not the case, it follows by the Steiner symmetry of $\{\uo>-1+2\phi^{1/3}\}$ that

\[
\per_{\tphil}(\{\uo>-1+2\phi^{1/3}\})\geq\frac{1}{2}\phi L,
\]
which, because of the monotonicity of $\{\uo>t\}$ with respect to $t$, implies in turn that
\[
\per_{\tphil}(\{\uo>t\})\geq\frac{1}{2}\phi L \quad \text{for all}\quad t\in[-1+\phi^{1/3},-1+2\phi^{1/3}].
\]
It follows that
\[
\int_{-1+\phi^{1/3}}^{-1+2\phi^{1/3}}\sqrt{2\tilde{G}(t)}\,\Big({\rm Per}_{\tphil}(\{u>t\})-{\rm P}_E(\{u>t\})\Big)\,dt\gtrsim \phi^{1/6},
\]
which contradicts \eqref{JJ}.

We now establish a lower bound on $I(\uo)$. For any $t\in[-1+2\phi^{1/3},1-2\phi^{1/3}]$ we will denote by $\rho(t),\,\rho_{in}(t)$ and $\rho_{out}(t)$ the volume-, inner- and outer radius of $\{\uo>t\}$, respectively. We will also use the notation
\[
\mathit{\Delta} \rho(t):=\rho_{out}(t)-\rho_{in}(t).
\]
Because the superlevel sets of $\uo$ are contained within a disk (as discussed above), we may apply the Bonnesen inequality \eqref{Bonnesen.Ineq2} to $I$ to obtain
\begin{eqnarray*}
\phi^{1/3}&\overset{\eqref{JJ},\eqref{Bonnesen.Ineq2}}\gtrsim& \int_{-1+2\phi^{1/3}}^{1-2\phi^{1/3}}\sqrt{2\tilde{G}(t)}\,\left[(4\pi|\{\uo>t\}|+\pi(\mathit{\mathit{\Delta}} \rho(t))^2)^{1/2}-{\rm P}_E(\{\uo>t\})\right]\,dt \notag \\
&=&2\sqrt{2\pi}\int_{-1+2\phi^{1/3}}^{1-2\phi^{1/3}}\sqrt{\tilde{G}(t)}\,|\{\uo>t\}|^{1/2}\left[\left(1+\frac{(\mathit{\Delta} \rho(t))^2}{4|\{\uo>t\}|}\right)^{1/2}-1\right]\,dt\notag\\
&\gtrsim &\int_{-1+2\phi^{1/3}}^{1-2\phi^{1/3}}\sqrt{\tilde{G}(t)}\,\frac{(\mathit{\Delta} \rho(t))^2}{|\{\uo>t\}|^{1/2}}\,dt.
\end{eqnarray*}
We combine this with \eqref{size} to deduce
\begin{align}\label{str.1}
\int_{-1+2\phi^{1/3}}^{1-2\phi^{1/3}}\sqrt{\tilde{G}(t)}\,(\mathit{\Delta} \rho(t))^2\,dt\lesssim \phi^{1/3}.
\end{align}
Next we observe that, due to the monotonicity of the superlevel sets $\{\uo>t\}$ with respect to $t$, we have
\begin{equation}\label{str.2}
\rho_{out}(t)\geq\rho_{out}(\eta)\quad \text{for every}\quad t\in[-1+2\phi^{1/3},\eta],
\end{equation}
and
\begin{equation}\label{str.3}
\rho_{in}(t)\leq\rho_{in}(\eta)\quad \text{for every}\quad t\in[\eta,1-2\phi^{1/3}].
\end{equation}
Moreover, again due to monotonicity, for all $t\in[-1+2\phi^{1/3},1-2\phi^{1/3}]$ there holds
\begin{eqnarray}
\rho_{out}(t)&\geq&\rho_{out}(1-2\phi^{1/3})\geq \left(|\{\uo>1-2\phi^{1/3}\}|/\pi\right)^{1/2}\notag \\
&\overset{\eqref{size}}=&r_\omega+O(\phi^{1/6}),\label{str.4}
\end{eqnarray}
and
\begin{eqnarray}
\rho_{in}(t)&\leq&\rho_{in}(-1+2\phi^{1/3})\leq \left(|\{\uo>-1+2\phi^{1/3}\}|/\pi\right)^{1/2}\notag\\
&\overset{\eqref{size}}=&r_{\omega}+O(\phi^{1/6}).\label{str.5}
\end{eqnarray}

By \eqref{str.2} and \eqref{str.5} it follows that for all $t\in [-1+2\phi^{1/3},\eta]$ the difference $\mathit{\Delta} \rho(t)$ satisfies
\[
\mathit{\Delta} \rho(t)\geq  \rho_{out}(\eta)-r_\omega+O(\phi^{1/6}).
\]
Substituting into \eqref{str.1} implies \eqref{str.0}.

For  $t\in [\eta,1-2\phi^{1/3}]$ on the other hand, \eqref{str.3} and \eqref{str.4} imply that
\[
\mathit{\Delta} \rho(t)\geq  r_\omega-\rho_{in}(\eta)+O(\phi^{1/6}),
\]
which together with \eqref{str.1} yields \eqref{str.0.1}.

\end{proof}
\section{Two Point Rearrangement applied to the Cahn-Hilliard problem}\label{S:GNN}
The two-point rearrangement was first introduced in \cite{Bae} and extensively discussed in \cite{BS}.
The periodic variant was given in \cite{Br}.
For any $\eta\in I_\ell$ we define the reflection in the $y$-direction with reflection point $\eta$ as
\begin{eqnarray*}
x^{\eta}:=(x',y^{\eta}):=(x',2\eta-y)
\end{eqnarray*}
and
\begin{eqnarray*}
u^{\eta}(x):=u(x^{\eta}).
\end{eqnarray*}
Note that if $x\in [\eta,\eta+\ell]\times\tphil'$, then $x^{\eta}\in [\eta-\ell,\eta]\times\tphil'$.
It is convenient to state our main result using two-point rearrangements in the following form.
\begin{theorem}\label{t:twopointsymm}
Let $u$  minimize $\E$ over $\X$. Assume that \eqref{supp} holds. Then there exists
$\eta^\star\in I_\ell$
such that
\begin{eqnarray*}
u=u^{\eta^{\star}}\quad\text{on }\tphil.
\end{eqnarray*}
Moreover, there holds
\begin{eqnarray*}
\partial_{y}u(x',y)< 0 \quad\text{on }(\eta^{\star},\eta^{\star}+\ell)\times \tphil'.
\end{eqnarray*}
\end{theorem}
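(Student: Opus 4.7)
Our plan follows the Gidas--Ni--Nirenberg scheme, with two-point rearrangements (polarizations) replacing the moving-planes mechanism on the periodic domain. For each $\eta \in \iphil$, define the polarization $T_\eta u$ by $T_\eta u = \max(u, u^\eta)$ on $H_\eta^+ := (\eta, \eta + \ell) \times \tphil'$ and $T_\eta u = \min(u, u^\eta)$ on the complementary half $H_\eta^-$. A direct change-of-variables calculation shows that $T_\eta u$ is equimeasurable with $u$ and preserves both the Dirichlet integral and $\int G(u)\,dx$; hence $T_\eta u \in \X$ with $\E(T_\eta u) = \E(u)$, so $T_\eta u$ is itself a minimizer and, by elliptic regularity, lies in $C^\infty(\tphil)$. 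The reflected function $u^\eta$ solves the same Euler--Lagrange equation as $u$ with identical Lagrange multipliers (by the reflection-invariance of the constraints and the equation), so $w_\eta := u - u^\eta$ satisfies a linear elliptic equation $-\Delta w_\eta + c_\eta(x)\, w_\eta = 0$ on $\tphil$ with $w_\eta = 0$ on the fixed set $\{y = \eta\} \cup \{y = \eta + \ell\}$.

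The crux is the following \emph{sign dichotomy}: for each $\eta \in \iphil$, either $u \equiv u^\eta$ on $\tphil$, or $w_\eta$ has strict constant sign on the interior of $H_\eta^+$. Indeed, if $u(x_0) = u^\eta(x_0)$ at some $x_0$ interior to $H_\eta^+$, the $C^\infty$-smoothness of $T_\eta u = \max(u, u^\eta)$ at $x_0$ forces $\nabla^k u(x_0) = \nabla^k u^\eta(x_0)$ for all $k \geq 0$, so $w_\eta$ vanishes to infinite order at $x_0$; strong unique continuation for linear elliptic operators then yields $w_\eta \equiv 0$. Setting $A^{\pm} := \{\eta \in \iphil : \pm w_\eta \geq 0 \text{ on } H_\eta^+\}$, both sets are closed, cover $\iphil$ by the dichotomy, and satisfy the duality $\eta \in A^{\pm} \Leftrightarrow \eta + \ell \in A^{\mp}$ (since $u^\eta = u^{\eta+\ell}$ while $H_{\eta+\ell}^+ = H_\eta^-$). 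Connectedness of the circle $\iphil$ now gives $A^+ \cap A^- \neq \emptyset$; any $\eta^*$ in this intersection satisfies $u = u^{\eta^*}$, proving the first assertion.

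For the strict monotonicity, the sharp-interface estimate $\|u - \Psi\|_{L^2} \ll 1$ of \cite{GWW} (used in Section~\ref{S:steinerCH}) precludes additional translation symmetries of $u$, so $A^+ \cap A^- = \{\eta^*, \eta^* + \ell\}$. Applying the connectedness argument to each open arc of $\iphil \setminus \{\eta^*, \eta^* + \ell\}$ together with the duality shows that one arc lies entirely in $A^+$ and the other in $A^-$; after relabeling so that $(\eta^*, \eta^* + \ell) \subset A^-$, we have $u(x', y) \leq u(x', 2\eta - y)$ for every $\eta \in (\eta^*, \eta^* + \ell)$ and $y \in H_\eta^+$. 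Choosing $\eta = y - \delta$ with $\delta \in (0, y - \eta^*)$ and letting $\delta \to 0^+$ yields $\partial_y u(x', y) \leq 0$ on $(\eta^*, \eta^* + \ell) \times \tphil'$. Since $\partial_y u$ solves the linearized Euler--Lagrange equation and vanishes on $\{y = \eta^*\} \cup \{y = \eta^* + \ell\}$ by symmetry, the strong maximum principle then gives either $\partial_y u < 0$ strictly (as claimed) or $\partial_y u \equiv 0$, the latter being excluded by the nontrivial droplet closeness to $\Psi$, exactly as in Section~\ref{S:steinerCH}.

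The most delicate step is the sign dichotomy: it requires combining $C^\infty$-regularity of the polarization (a consequence of its being a minimizer) with strong unique continuation for linear elliptic operators, the key being that smoothness of $\max(u, u^\eta)$ at any crossing point forces all derivatives of $u$ and $u^\eta$ to coincide there. Once the dichotomy is in hand, the remaining topological arguments on the reflection circle $\iphil$ and the $\eta \leftrightarrow \eta + \ell$ duality are straightforward.
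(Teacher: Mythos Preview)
Your overall architecture matches the paper's: establish a sign dichotomy for $w_\eta=u-u^\eta$ on $H_\eta^+$ (this is Proposition~\ref{either}), then run a connectedness argument on the circle of reflection parameters to locate $\eta^\star$ and extract monotonicity. The topological portion and the monotonicity step are fine.

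The gap is in your proof of the dichotomy. The assertion that $C^\infty$-smoothness of $\max(u,u^\eta)$ at a point $x_0$ with $u(x_0)=u^\eta(x_0)$ forces $\nabla^k u(x_0)=\nabla^k u^\eta(x_0)$ for all $k$ is false: if locally $u-u^\eta=y^2$, then $\max(u,u^\eta)=u$ is smooth and $u(0)=u^\eta(0)$, yet the second derivatives disagree. Smoothness of a maximum at a touching point gives no information unless the two functions actually cross there, and your argument does not separate touching from crossing. The paper avoids this by applying \emph{weak} unique continuation not to $w_\eta$ but to $u^\eta-T_\eta u$, which vanishes on the open set $\{w_\eta<0\}$ whenever $w_\eta$ changes sign; for this one must first verify that $T_\eta u$ satisfies the Euler--Lagrange equation with the \emph{same} Lagrange multipliers as $u$ and $u^\eta$, and it is precisely here that hypothesis~\eqref{supp} enters (see the proof of Proposition~\ref{either}). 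Your argument invokes neither~\eqref{supp} nor the multipliers of $T_\eta u$, so as written it does not explain where this hypothesis is used. Your route can be repaired---e.g.\ by noting that both $(w_\eta)_+=T_\eta u-u^\eta$ and $(w_\eta)_-=T_\eta u-u$ are smooth and vanish on the open sets $\{w_\eta<0\}$ and $\{w_\eta>0\}$ respectively, and then arguing topologically that a genuine sign change forces $w_\eta$ to vanish to infinite order somewhere---but that extra work is missing.
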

\begin{remark}
  Clearly Theorem~\ref{t:twopointsymm} provides an alternate proof of Theorem~\ref{t:ssym}.
\end{remark}
We will establish this result by way of the so-called two-point rearrangement.
\begin{definition}\label{tpr}
The \uline{two-point rearrangement} of a function $u\in W^{1,2}(\tphil)$ for $\eta\in[-\ell,\ell]$ is defined as
\begin{eqnarray*}
T^{\eta}u(x):=
\begin{cases}
\max\{u(x),u(x^{\eta})\}&\hbox{for all}\; x\in \tphil'\times[\eta,\eta+\ell]\\
\min\{u(x),u(x^{\eta})\}&\hbox{for all}\; x\in \tphil'\times[\eta-\ell,\eta],
\end{cases}
\end{eqnarray*}
and we will identify it with its $2\ell$-periodic continuation to the rest of $\rz^d$ and in particular to $\tphil$.
\end{definition}
Along the way, we will make use of the following Weak Unique Continuation Principle (cf.\ \cite[Section 1, case (I)]{Wo}).
\begin{theorem}\label{ucp}
Let $\Omega\subseteq\R$ be a bounded domain and $c\in L^{\infty}(\Omega)$. Let $u\in W^{2,2}(\Omega)$ satisfy
\begin{eqnarray*}
-\Delta u+c(x)u=0\qquad\hbox{in}\:\Omega.
\end{eqnarray*}
Assume there is a nonempty, open subset $U\subseteq\Omega$ such that $u\equiv 0$ in $U$. Then $u\equiv 0$ in $\Omega$.
\end{theorem}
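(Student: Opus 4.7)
My plan is to prove the Weak Unique Continuation Principle by combining a Carleman-type weighted estimate with a connectedness argument — the classical route when the potential $c$ lies in $L^\infty$ (Wolff's paper allows much rougher $c$, but that strength is not needed here). The main analytic ingredient is the following local estimate: fixing $x_0\in\Omega$ and $r>0$ with $B_r(x_0)\subset\Omega$, there exist $\tau_0, C>0$ such that for every $\tau\geq\tau_0$ and every $v\in C_c^\infty(B_r(x_0)\setminus\{x_0\})$,
\begin{equation*}
\tau^3\int|x-x_0|^{-2\tau-d-2}\,v^2\,dx\;\leq\;C\int|x-x_0|^{-2\tau-d+2}\,|\Delta v|^2\,dx.
\end{equation*}
I would prove this by splitting the conjugated operator $L_\tau := |x-x_0|^{\tau}\Delta(|x-x_0|^{-\tau}\,\cdot\,)$ into its symmetric and antisymmetric parts $P_s,P_a$, writing $\|L_\tau v\|^2 = \|P_s v\|^2 + \|P_a v\|^2 + 2\langle [P_s,P_a]v, v\rangle$, and extracting positivity of the commutator (most cleanly after passage to polar coordinates and a Rellich-type identity). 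The cubic gain in $\tau$ on the left comes precisely from this commutator term.

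With the Carleman estimate in hand, I would apply it to $v=\chi u$, where $\chi$ is a smooth cutoff equal to $1$ on $B_{r/2}(x_0)$ and supported in $B_r(x_0)\setminus\{x_0\}$. Using $\Delta u = c(x)u$, the contribution $\int|x-x_0|^{-2\tau-d+2}\chi^2 c(x)^2 u^2\,dx$ on the right is absorbed into the left-hand side for $\tau$ sufficiently large (depending only on $\|c\|_\infty$ and $r$), while the remaining error terms come from $\nabla\chi$ and $\Delta\chi$ and are supported in an annulus bounded away from $x_0$, where the weight $|x-x_0|^{-2\tau}$ is uniformly bounded in $\tau$. If $u$ already vanishes on some ball $B_\rho(x_0)$ with $\rho<r/2$, then sending $\tau\to\infty$ in the resulting inequality forces the weighted integral of $u^2$ over $B_{r/2}(x_0)\setminus B_\rho(x_0)$ to zero, yielding $u\equiv 0$ on all of $B_{r/2}(x_0)$.

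The statement then follows by a standard topological propagation argument: the set
\begin{equation*}
V:=\{x\in\Omega:u\equiv 0\text{ on a neighborhood of }x\}
\end{equation*}
is open by definition, nonempty by the hypothesis $U\subset V$, and relatively closed in $\Omega$ by the argument just sketched; since $\Omega$ is a bounded domain (hence connected), we conclude $V=\Omega$, which is the claim. The main obstacle is the Carleman estimate itself — a delicate choice of weight and patient bookkeeping of the commutator computation — while the absorption argument and the connectedness step are quite routine once the estimate is in place.
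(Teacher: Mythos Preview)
The paper does not prove Theorem~\ref{ucp} at all: it is stated as a known result and attributed to \cite[Section 1, case (I)]{Wo}, with no argument given. Your proposal therefore goes well beyond what the paper does, supplying the classical Carleman-estimate route to weak unique continuation for $L^\infty$ potentials; the outline (power-weight Carleman inequality, absorption of the zeroth-order term for large $\tau$, propagation by connectedness) is the standard one and is correct in its essentials.

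One small imprecision worth tightening: you describe the cutoff $\chi$ as both ``equal to $1$ on $B_{r/2}(x_0)$'' and ``supported in $B_r(x_0)\setminus\{x_0\}$'', which cannot both hold. What you need is that the \emph{product} $v=\chi u$ is supported away from $x_0$, which follows because $u\equiv 0$ on $B_\rho(x_0)$; equivalently, take $\chi$ supported in $B_r(x_0)$ with $\chi\equiv 1$ on $B_{r/2}(x_0)\setminus B_{\rho/2}(x_0)$ and $\chi\equiv 0$ on $B_{\rho/4}(x_0)$, so that the inner cutoff errors sit where $u$ vanishes. You should also remark that $v$ is only $W^{2,2}$, so a routine density argument is needed to invoke the Carleman estimate stated for $C_c^\infty$ test functions. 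With these clarifications the argument is complete.
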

\subsection{Background and a rigidity result}
In this subsection we develop the necessary background that we need to prove Theorem~\ref{t:twopointsymm}.
We start by collecting a few elementary properties of the two-point rearrangement.
\begin{remark}\label{prop1}
The following statements are equivalent:
\begin{itemize}
\item[(i)] $u(x)=T^{\eta}u(x)$ in $\tphil$;
\item[(ii)] $u(x)\geq u(x^{\eta})$ for all $x\in \tphil'\times[\eta,\eta+\ell]$;
\item[(iii)] $u(x)\leq u(x^{\eta})$ for all $x\in \tphil'\times[\eta-\ell,\eta]$.
\end{itemize}
\end{remark}
\begin{lemma}\label{prop2}
 If $u(x)=T^{\eta}u(x)$ in $\tphil$, then
\begin{eqnarray*}
T^{\eta+\ell}u(x)=u^{\eta}(x)\qquad\hbox{for all}\:x\in\tphil'\times[\eta-\ell,\eta+\ell].
\end{eqnarray*}
Analogously, if $u^{\eta}(x)=T^{\eta}u(x)$ in $\tphil$, then
\begin{eqnarray*}
T^{\eta+\ell}u(x)=u(x)\qquad\hbox{for all}\; x\in\tphil'\times[\eta-\ell,\eta+\ell].
\end{eqnarray*}
\end{lemma}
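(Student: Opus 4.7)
The plan hinges on one periodicity identity and a case split on the fundamental domain of $T^{\eta+\ell}$. First I would record the identity
\begin{eqnarray*}
u(x^{\eta+\ell}) \;=\; u(x',\,2\eta + 2\ell - y) \;=\; u(x',\,2\eta - y) \;=\; u^{\eta}(x),
\end{eqnarray*}
which follows from the $2\ell$-periodicity of $u$. Thus computing $T^{\eta+\ell}u(x)$ at any point reduces to comparing the two values $u(x)$ and $u^{\eta}(x)$, and this comparison is precisely what the hypothesis $u=T^{\eta}u$ controls via Remark~\ref{prop1}.

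Next I would split $\tphil'\times[\eta-\ell,\eta+\ell]$ into two half-slabs, viewing $[\eta,\eta+2\ell]$ as the fundamental domain for $T^{\eta+\ell}$. On the upper half $\tphil'\times[\eta,\eta+\ell]$---the ``min'' side of $T^{\eta+\ell}$---the identity above and the definition of the two-point rearrangement give $T^{\eta+\ell}u = \min\{u,u^{\eta}\}$; by Remark~\ref{prop1}(ii) applied to $u=T^{\eta}u$ we have $u\geq u^{\eta}$ on this half, so the min equals $u^{\eta}$. On the lower half $\tphil'\times[\eta-\ell,\eta]$ I would use the $2\ell$-periodicity of $T^{\eta+\ell}u$ to shift $y\mapsto y+2\ell$ up into $[\eta+\ell,\eta+2\ell]$, which is the ``max'' side of the fundamental domain, and deduce $T^{\eta+\ell}u = \max\{u,u^{\eta}\}$ there; Remark~\ref{prop1}(iii), the equivalent form of $u=T^{\eta}u$, then gives $u\leq u^{\eta}$ on the lower half, so the max again equals $u^{\eta}$, completing the first claim.

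The second statement follows by the same bookkeeping with the inequalities reversed: $u^{\eta} = T^{\eta}u$ forces $u\leq u^{\eta}$ on the upper half and $u\geq u^{\eta}$ on the lower half, so $\min\{u,u^{\eta}\}=u$ on the upper half while $\max\{u,u^{\eta}\}=u$ on the lower half, yielding $T^{\eta+\ell}u = u$ throughout $\tphil'\times[\eta-\ell,\eta+\ell]$. The entire argument is a direct unfolding of definitions, so I do not anticipate any genuine obstacle; the only point requiring care is keeping straight which half-slab is the ``min'' and which is the ``max'' side for $T^{\eta+\ell}$, versus the corresponding sides for $T^{\eta}$, so that the correct clause of Remark~\ref{prop1} is invoked in each case.
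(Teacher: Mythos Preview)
Your proposal is correct and follows essentially the same route as the paper: both arguments rest on the periodicity identity $u(x^{\eta+\ell})=u^{\eta}(x)$, the min/max case split for $T^{\eta+\ell}$ on the two half-slabs (with the shift $[\eta-\ell,\eta]\leftrightarrow[\eta+\ell,\eta+2\ell]$), and Remark~\ref{prop1} to resolve the min and max. The only cosmetic difference is that the paper dispatches the second assertion by noting that it follows from the first, whereas you redo the bookkeeping with reversed inequalities; both are fine.
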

\begin{proof}
It suffices to prove the first statement, since the first statement implies the second. From the definition of $T^{\eta}$, we observe
\begin{eqnarray*}
T^{\eta+\ell}u(x)
&=&
\begin{cases}
\max\{u(x),u(x^{\eta+\ell})\}&\hbox{for all}\; x\in\tphil'\times[\eta+\ell,\eta+2\ell],\\
\min\{u(x),u(x^{\eta+\ell})\}&\hbox{for all}\; x\in\tphil'\times[\eta,\eta+\ell].
\end{cases}
\end{eqnarray*}
Since $u$ is $2\ell$-periodic in the $y$-variable, there holds
\begin{eqnarray*}
u(x^{\eta+\ell})=u(x',2(\eta+\ell)-y)=u(x',2\eta-y+2\ell)=u(x',2\eta-y)=u(x^{\eta}).
\end{eqnarray*}
We use this to conclude
\begin{eqnarray*}
T^{\eta+\ell}u(x)
&=&
\begin{cases}
\max\{u(x),u(x^{\eta})\}&\hbox{for all}\; x\in\tphil'\times[\eta+\ell,\eta+2\ell]\\
\min\{u(x),u(x^{\eta})\}&\hbox{for all}\; x\in\tphil'\times[\eta,\eta+\ell],
\end{cases}\\
&=&
\begin{cases}
\max\{u(x),u(x^{\eta})\}&\hbox{for all}\; x\in\tphil'\times[\eta-\ell,\eta]\\
\min\{u(x),u(x^{\eta})\}&\hbox{for all}\; x\in\tphil'\times[\eta,\eta+\ell],
\end{cases}
\end{eqnarray*}
where for the second equality we again  used the $2\ell$-periodicity of $u$.
Remark \ref{prop1} then yields the result.
\end{proof}
The following  lemma is an adaptation of \cite{BS} to the periodic setting.
The first lemma implies that if $u\in X$ is a minimizer of $\E$, then  $T^{\eta}u\in X$ is also a minimizer of $\E$.
\begin{lemma}\label{proposition}
Let $u\in W^{1,2}(\tphil)$ and $\eta\in[-\ell,\ell]$. Then $T^{\eta}u\in W^{1,2}(\tphil)$ and
\begin{eqnarray}\label{prop3}
\int\limits_{\tphil}\vert Du\vert^2\:dx=\int\limits_{\tphil}\vert DT^{\eta}u\vert^2\:dx.
\end{eqnarray}
If $u\in C(\tphil)$, then $T^{\eta}u\in C(\tphil)$ for any $\eta\in[-\ell,\ell]$ and for any $G\in C^1(\tphil)$ there holds
\begin{eqnarray}\label{prop4}
\int\limits_{-\ell}^{\ell}G(u(x',y))\:dy =\int\limits_{-\ell}^{\ell}G(T^{\eta}u(x',y))\:dy\qquad\hbox{for all}\:x'\in\tphil'.
\end{eqnarray}
\end{lemma}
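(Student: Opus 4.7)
The plan is to reduce the two-point rearrangement to the operations $\max$, $\min$ and the reflection $u \mapsto u^{\eta}$, and then pair each point $x$ in the upper strip $S^{+} := \tphil' \times [\eta,\eta+\ell]$ with its reflected copy $x^{\eta}$ in the lower strip $S^{-} := \tphil' \times [\eta-\ell,\eta]$.

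First I would verify $T^{\eta}u \in W^{1,2}(\tphil)$. On $S^{+}$, $T^{\eta}u = \max\{u,u^{\eta}\} = \tfrac{1}{2}(u+u^{\eta}+|u-u^{\eta}|)$, and on $S^{-}$, $T^{\eta}u = \min\{u,u^{\eta}\} = \tfrac{1}{2}(u+u^{\eta}-|u-u^{\eta}|)$. Since the reflection is an isometry, $u^{\eta} \in W^{1,2}$ of each strip, and since $v \mapsto |v|$ preserves $W^{1,2}$ regularity, each formula defines a $W^{1,2}$ function on its strip. The two formulas have identical traces on the common interface $\{y=\eta\}$ (both collapse to $u(x',\eta)$), and on the outer boundaries $\{y=\eta\pm\ell\}$ they match under $2\ell$-periodic extension because $(x',\eta+\ell)^{\eta}=(x',\eta-\ell)$ and $u$ is $2\ell$-periodic in $y$. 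Hence $T^{\eta}u \in W^{1,2}(\tphil)$.

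For the Dirichlet identity \eqref{prop3}, I would decompose $S^{+}$ into $A := \{u>u^{\eta}\}$, $B := \{u<u^{\eta}\}$, $C := \{u=u^{\eta}\}$. On $A$ we have $T^{\eta}u = u$, so $|\nabla T^{\eta}u| = |\nabla u|$; on $B$ we have $T^{\eta}u = u^{\eta}$, so $|\nabla T^{\eta}u(x)| = |\nabla u(x^{\eta})|$ because the reflection has unit singular values; and on $C$ the Sobolev fact $\nabla(u-u^{\eta}) = 0$ a.e.\ on the zero set of $u-u^{\eta}$ gives $|\nabla T^{\eta}u| = |\nabla u|$ a.e. The analogous decomposition of $S^{-}$ gives the min-side versions. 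Applying the change of variables $x \mapsto x^{\eta}$ to the $B$-piece in $S^{+}$ (and to the symmetric piece in $S^{-}$) then shows that the pieces of $\int_{\tphil} |\nabla T^{\eta}u|^{2}$ sum to $\int_{\tphil} |\nabla u|^{2}$ with each point accounted for exactly once.

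Continuity of $T^{\eta}u$ when $u \in C(\tphil)$ is immediate since $\max$ and $\min$ preserve continuity and the two strip formulas agree at $y=\eta$. For \eqref{prop4}, I would fix $x' \in \tphil'$ and use $2\ell$-periodicity together with the substitution $z = 2\eta - y$ on the lower strip to write
\begin{align*}
\int_{-\ell}^{\ell} G(u(x',y))\,dy = \int_{\eta}^{\eta+\ell}\bigl[G(u(x',y)) + G(u((x',y)^{\eta}))\bigr]\,dy.
\end{align*}
The unordered pair $\{T^{\eta}u(x),T^{\eta}u(x^{\eta})\}$ equals $\{u(x),u(x^{\eta})\}$ pointwise, so the integrand is invariant under $u \mapsto T^{\eta}u$, and reversing the substitution yields \eqref{prop4}. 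The only delicate point in the whole argument is the identity $\nabla(u-u^{\eta})=0$ a.e. on $C$, which is a standard property of Sobolev functions and requires no further work.
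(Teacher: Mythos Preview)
Your proof is correct and follows essentially the same route as the paper: both arguments split the fundamental domain into the strips $[\eta-\ell,\eta]$ and $[\eta,\eta+\ell]$, decompose each strip according to whether $u$ or $u^{\eta}$ is larger, and then apply the change of variable $y\mapsto 2\eta-y$ to the pieces where $T^{\eta}u=u^{\eta}$ so that everything reassembles into $\int|\nabla u|^{2}$. Your write-up is in fact slightly more careful than the paper's, since you explicitly verify the $W^{1,2}$ gluing across $\{y=\eta\}$ and $\{y=\eta\pm\ell\}$ and isolate the coincidence set $C=\{u=u^{\eta}\}$, whereas the paper handles $C$ implicitly by choosing the inequalities $\geq$ versus $<$ (and $>$ versus $\leq$) so that the transformed pieces partition the domain exactly.
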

\begin{proof}
We give the proof of \eqref{prop3}; the proof of \eqref{prop4} is similar.
It is enough to consider the integral with respect to $y$, which we decompose as
\begin{eqnarray*}
\int\limits_{-\ell}^{\ell}\vert DT^{\eta}u\vert^2\:dy
=
\int\limits_{-\ell}^{\eta-\ell}\vert DT^{\eta}u\vert^2\:dy
+
\int\limits_{\eta-\ell}^{\eta}\vert DT^{\eta}u\vert^2\:dy
+
\int\limits_{\eta}^{\ell}\vert DT^{\eta}u\vert^2\:dy.
\end{eqnarray*}
The $2\ell$-periodicity of $T^{\eta}u$ implies
\begin{eqnarray*}
\int\limits_{-\ell}^{\eta-\ell}\vert DT^{\eta}u\vert^2\:dy=\int\limits_{\ell}^{\eta+\ell}\vert DT^{\eta}u\vert^2\:dy,
\end{eqnarray*}
so that
\begin{eqnarray*}
\int\limits_{-\ell}^{\ell}\vert DT^{\eta}u\vert^2\:dy
&=&
\int\limits_{\eta-\ell}^{\eta}\vert DT^{\eta}u\vert^2\:dy
+
\int\limits_{\eta}^{\eta+\ell}\vert DT^{\eta}u\vert^2\:dy.
\end{eqnarray*}
We use the definition of $T^{\eta}u$ and split the domain of integration as
\begin{eqnarray*}
&&\int\limits_{-\ell}^{\ell}\vert DT^{\eta}u\vert^2\:dy\\
&=&
\int\limits_{(\eta-\ell,\eta)\cap\{y:u(x',y)\geq u^{\eta}(x',y)\}}\vert Du^{\eta}\vert^2\:dy
+
\int\limits_{(\eta-\ell,\eta)\cap\{y:u(x',y)< u^{\eta}(x',y)\}}\vert Du\vert^2\:dy\\
&&+
\int\limits_{(\eta,\eta+\ell)\cap\{y:u(x',y)> u^{\eta}(x',y)\}}\vert Du\vert^2\:dy
+
\int\limits_{(\eta,\eta+\ell)\cap\{y:u(x',y)\leq u^{\eta}(x',y)\}}\vert Du^{\eta}\vert^2\:dy.
\end{eqnarray*}
The change of variable $\tilde{y}=2\eta-y$ in the first and fourth integrals and the identities
\begin{eqnarray*}
u(x',y)=u(x',2\eta-\tilde{y})=u^{\eta}(x',\tilde{y}),
\qquad\hbox{and}\qquad u^{\eta}(x',y)=u(x',2\eta-y)=u(x',\tilde{y})
\end{eqnarray*}
lead to
\begin{eqnarray*}
&&\int\limits_{-\ell}^{\ell}\vert DT^{\eta}u\vert^2\:dy\\
&=&
\int\limits_{(\eta,\eta+\ell)\cap\{y:u^{\eta}(x',\tilde{y})\geq u(x',\tilde{y})\}}\vert Du\vert^2\:d\tilde{y}
+
\int\limits_{(\eta-\ell,\eta)\cap\{y:u(x',y)< u^{\eta}(x',y)\}}\vert Du\vert^2\:dy\\
&&+
\int\limits_{(\eta,\eta+\ell)\cap\{y:u(x',y)> u^{\eta}(x',y)\}}\vert Du\vert^2\:dy
+
\int\limits_{(\eta-\ell,\eta)\cap\{y:u^{\eta}(x',\tilde{y})\leq u(x',\tilde{y})\}}\vert Du\vert^2\:d\tilde{y}\\
&=&
\int\limits_{-\ell}^{\ell}\vert Du\vert^2\:dy.
\end{eqnarray*}
\end{proof}
Next we prove a statement about the dependence of the Lagrange multipliers $\lambda_{\phi}$ and $\lambda_{\omega}$ from \eqref{el}, \eqref{elf} on solutions and their reflections.
\begin{lemma}\label{lagrange}
Let $u\in X$ be a minimizer of $\E$. For any $\eta\in[-\ell,\ell]$ the Lagrange multipliers $\lambda_{\phi}$ and $\lambda_{\omega}$ from \eqref{el}-\eqref{elf} satisfy
\begin{eqnarray*}
\lambda_{\phi}(u)=\lambda_{\phi}(u^{\eta})\qquad\hbox{and}\qquad\lambda_{\omega}(u)=\lambda_{\omega}(u^{\eta}).
\end{eqnarray*}
\end{lemma}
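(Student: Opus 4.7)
The plan is a two-step argument: first, show by direct computation that $u^\eta$ satisfies the Euler-Lagrange equation \eqref{el}--\eqref{elf} with Lagrange parameters $\lambda_\phi(u)$ and $\lambda_\omega(u)$; second, use \eqref{supp} to deduce uniqueness of the Lagrange parameters for the constrained minimizer $u^\eta$, which forces the pair $(\lambda_\phi(u^\eta),\lambda_\omega(u^\eta))$ to coincide with $(\lambda_\phi(u),\lambda_\omega(u))$.

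For the first step, the chain rule gives $\partial_i u^\eta(x)=(\partial_i u)(x^\eta)$ for $i=1,\ldots,d-1$ and $\partial_y u^\eta(x)=-(\partial_y u)(x^\eta)$, so that $\Delta u^\eta(x)=(\Delta u)(x^\eta)$. Evaluating \eqref{el} at the reflected point $x^\eta$ and using $u^\eta(x)=u(x^\eta)$ therefore yields
\begin{equation*}
-\Delta u^\eta(x) + \tfrac{1}{\phi^2}G'(u^\eta(x)) + \tfrac{1}{\phi}\bigl(\lambda_\phi(u)+\lambda_\omega(u)\,\zeta'(u^\eta(x))\bigr)=0\quad\text{in }\tphil.
\end{equation*}
The change of variables $y\mapsto 2\eta-y$ together with $2\ell$-periodicity implies that reflection preserves the mean, the volume $\int\zeta(u)\,dx$, and the energy $\E(u)$; hence $u^\eta\in\X$ is itself a constrained minimizer of $\E$ and thus satisfies \eqref{el}--\eqref{elf} with some pair $(\lambda_\phi(u^\eta),\lambda_\omega(u^\eta))$.

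For the second step, subtracting the two Euler-Lagrange equations satisfied by $u^\eta$ yields
\begin{equation*}
\bigl(\lambda_\phi(u)-\lambda_\phi(u^\eta)\bigr)+\bigl(\lambda_\omega(u)-\lambda_\omega(u^\eta)\bigr)\zeta'(u^\eta)\equiv 0\quad\text{on }\tphil.
\end{equation*}
Because \eqref{supp} holds and $\zeta'(u^\eta)(x)=\zeta'(u)(x^\eta)$, there is a nonempty open set on which $\zeta'(u^\eta)$ vanishes identically; evaluating the identity there forces $\lambda_\phi(u)=\lambda_\phi(u^\eta)$. On any region where $\zeta'(u^\eta)\not\equiv 0$ (which is nonempty for $\phi$ small, since the $L^2$-closeness of $u$ to the sharp-interface droplet $\Psi$ forces $u$ to pass through the transition layer $[1-2\phi^{1/3},1-\phi^{1/3}]$ supporting $\zeta'$), the identity then forces $\lambda_\omega(u)=\lambda_\omega(u^\eta)$.

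The main obstacle is precisely the uniqueness of the two Lagrange parameters: without a region on which $\zeta'(u)$ vanishes \emph{and} another on which it does not, one cannot decouple the constant term from the $\zeta'(u)$-weighted term in the subtracted Euler-Lagrange equation. Hypothesis \eqref{supp} supplies the vanishing region; the proximity of $u$ to a sharp-interface droplet supplies the complementary region.
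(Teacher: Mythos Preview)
Your proof is correct, but it follows a genuinely different route from the paper's. The paper never observes that $u^\eta$ satisfies \eqref{el}--\eqref{elf} with the \emph{original} parameters $(\lambda_\phi(u),\lambda_\omega(u))$; instead it works only with the parameters $(\lambda_\phi(u^\eta),\lambda_\omega(u^\eta))$ coming from minimality of $u^\eta$, tests the two Euler--Lagrange equations against $u-\bar u$ and $u^\eta-\bar{u^\eta}$ respectively, and integrates. Because $\int(u-\bar u)\,dx=0$, the $\lambda_\phi$ term drops out, and after the change of variables $y\mapsto 2\eta-y$ one is left with $(\lambda_\omega(u)-\lambda_\omega(u^\eta))\int\zeta'(u)(u-\bar u)\,dx=0$. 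The integral is strictly positive since $\zeta'(u)\geq 0$ is supported where $u>1-2\phi^{1/3}>\bar u$, so $\lambda_\omega(u)=\lambda_\omega(u^\eta)$; integrating the equations once more (without test function) then gives $\lambda_\phi(u)=\lambda_\phi(u^\eta)$.

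Your argument is more conceptual---the reflection invariance of the Laplacian does all the work---but it buys this simplicity at the cost of importing two extra ingredients: hypothesis \eqref{supp} to find a point where $\zeta'(u^\eta)=0$, and the $L^2$-closeness to the droplet profile to find a point where $\zeta'(u^\eta)\neq 0$. The paper's integral-testing approach needs neither (it only needs $\int\zeta'(u)(u-\bar u)\,dx\neq 0$, which follows from the support of $\zeta'$ alone). It is worth noting that your subtraction argument is exactly the device the paper uses in the \emph{next} proposition (Proposition~\ref{either}), where \eqref{supp} is invoked for the same purpose; so you have in effect anticipated that step and applied it one lemma earlier.
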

\begin{proof}
Clearly for any $\eta\in[-\ell,\ell]$ the function $u^{\eta}$ is also a minimizer of $\E$ and thus satisfies \eqref{el}--\eqref{elf} with Lagrange parameters $\lambda_{\phi}(u^{\eta}),\, \lambda_{\omega}(u^{\eta})$.
Multiplying \eqref{el} for $u$ and $u^{\eta}$ by $u-\overline{u}$ and $u^{\eta}-\overline{u^{\eta}}$, respectively, and integrating gives
\begin{align}
0&=\int\limits_{\tphil}\vert D  u\vert^2\:dx+\frac{1}{\phi^2}\int\limits_{\tphil}G'(u)(u-\overline{u})\:dx+\frac{\lambda_{\omega}(u)}{\phi}\int\limits_{\tphil}\zeta'(u)(u-\overline{u})\:dx,
\label{eq1}\\
0&=\int\limits_{\tphil}\vert D  u^{\eta}\vert^2\:dx+\frac{1}{\phi^2}\int\limits_{\tphil}G'(u^{\eta})(u^{\eta}-\overline{u^{\eta}})\:dx+
\frac{\lambda_{\omega}(u^{\eta})}{\phi}\int\limits_{\tphil}\zeta'(u^{\eta})(u^{\eta}-\overline{u^{\eta}})\:dx.\notag
\end{align}
The change of variables $y_1=2\eta-y$, $y'=x'$ in the second equation yields
\begin{align}
0=\int\limits_{\tphil}\vert D  u\vert^2\:dy+\frac{1}{\phi^2}\int\limits_{\tphil}G'(u)(u-\overline{u})\:dy+\frac{\lambda_{\omega}(u^{\eta})}{\phi}\int\limits_{\tphil}\zeta'(u)(u-\overline{u})\:dy.\label{eq2}
\end{align}
From \eqref{eq1} and \eqref{eq2} we deduce $\lambda_{\omega}(u)=\lambda_{\omega}(u^{\eta})$.

Integrating \eqref{el} for $u$ and $u^{\eta}$ gives
\begin{align*}
0&=\frac{1}{\phi}\int\limits_{\tphil}G'(u)\:dx+\lambda_{\phi}(u)+\lambda_{\omega}(u)\int\limits_{\tphil}\zeta'(u)\:dx,\\
0&=\frac{1}{\phi}\int\limits_{\tphil}G'(u^{\eta})\:dx+\lambda_{\phi}(u^{\eta})+\lambda_{\omega}(u^{\eta})\int\limits_{\tphil}\zeta'(u^{\eta})\:dx.
\end{align*}
Changing variables  in the second equation as above and using $\lambda_{\omega}(u)=\lambda_{\omega}(u^{\eta})$ yields $\lambda_{\phi}(u)=\lambda_{\phi}(u^{\eta})$.
\end{proof}
The following ``rigidity'' result provides the core of our argument. Using the equality of Lagrange parameters from the previous lemma, we are able to apply the Weak Unique Continuation Principle to conclude that one of two alternatives holds for each shift parameter $\eta$. The statement does not exclude that both alternatives may occur.
\begin{proposition}\label{either}
Let $u\in X$ be a minimizer of $\E$. For any $\eta\in[-\ell,\ell]$ we have
\begin{eqnarray*}
u=T^{\eta}u\qquad\hbox{or}\qquad u^{\eta}=T^{\eta}u\qquad\hbox{in}\quad\tphil.
\end{eqnarray*}
\end{proposition}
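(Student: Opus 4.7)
Proof plan. I plan to establish the dichotomy for a fixed $\eta\in[-\ell,\ell]$ by showing that $v:=T^{\eta}u$ is itself a constrained minimizer, deriving a linear homogeneous elliptic equation for the difference $\psi:=v-u$ on $\tphil$, and then applying the Strong Maximum Principle on the upper half-slab $U:=\tphil'\times[\eta,\eta+\ell]$.

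The first step is to verify that $v\in\X$ with $\E(v)=\E(u)$. By Lemma \ref{proposition}, $v\in W^{1,2}(\tphil)$ and has the same Dirichlet integral as $u$. Applying \eqref{prop4} successively with the identity map, with $\zeta$, and with the potential $G$, we obtain $\overline{v}=\overline{u}=-1+\phi$, $\int_{\tphil}\zeta(v)\,dx=\omega$, and $\int_{\tphil}G(v)\,dx=\int_{\tphil}G(u)\,dx$. Hence $v\in\X$ and $\E(v)=\E(u)$, so $v$ is also a constrained minimizer. Consequently $v$ satisfies an Euler--Lagrange equation of the form \eqref{el} (a priori with Lagrange multipliers $\widetilde{\lambda}_{\phi},\widetilde{\lambda}_{\omega}$ different from those of $u$) and is smooth by elliptic bootstrapping.

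The second step is to derive the PDE for $\psi$. By Lemma \ref{lagrange}, the reflection $u^{\eta}$ satisfies the same EL equation as $u$, with the same Lagrange multipliers $\lambda_{\phi},\lambda_{\omega}$. Setting
\[
c(x):=\int_{0}^{1} f'\bigl(tu^{\eta}(x)+(1-t)u(x)\bigr)\,dt\in C(\tphil),
\]
the mean-value theorem gives $-\Delta(u^{\eta}-u)+c(x)(u^{\eta}-u)=0$ on $\tphil$. On each of the open pieces $\{u>u^{\eta}\}\cap U$, $\{u<u^{\eta}\}\cap U$, and the analogous subsets of the lower half, $v$ coincides with either $u$ or $u^{\eta}$, so $\psi$ is either $0$ or $\pm(u^{\eta}-u)$, and in every case satisfies $-\Delta\psi+c(x)\psi=0$. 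Smoothness of $\psi$ (inherited from $u$ and $v$) together with continuity of $c$ then extends the equation to all of $\tphil$ by continuity of both sides.

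The final step applies the Strong Maximum Principle. By construction $\psi\geq 0$ on $U$. For any $p\in U^{\circ}$ with $\psi(p)=0$, Theorem \ref{t:strongmax} applied to $-\psi$ on a small ball $B\subset U^{\circ}$ (unfolding the $\tphil'$ coordinates into $\mathbb{R}^{d-1}$ so that $B$ is a standard Euclidean ball) forces $\psi\equiv 0$ on $B$. Hence $\{\psi=0\}\cap U^{\circ}$ is open, and it is also closed in $U^{\circ}$ by continuity; since $U^{\circ}$ is connected, either $\psi\equiv 0$ on $U^{\circ}$ or $\psi>0$ on $U^{\circ}$. In the first case $T^{\eta}u=u$ on $U$, so $u\geq u^{\eta}$ on $U$; the reflection identity $(u-u^{\eta})(x^{\eta})=-(u-u^{\eta})(x)$ yields $u\leq u^{\eta}$ on the lower half, so $T^{\eta}u=u$ there as well and hence $u=T^{\eta}u$ on $\tphil$. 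In the second case $u^{\eta}>u$ on $U^{\circ}$, hence $u^{\eta}\geq u$ on $U$ and $T^{\eta}u=u^{\eta}$ on $U$; by the same reflection identity $u^{\eta}\leq u$ on the lower half, so $T^{\eta}u=u^{\eta}$ throughout $\tphil$. The strict monotonicity assertion of Theorem~\ref{t:twopointsymm} would then follow by combining this dichotomy with the strong maximum principle applied to $\partial_{y}u$ via \eqref{ieq}, as in Section~\ref{S:steinerCH}.

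The main obstacle lies in Step 2: because $v$ is defined piecewise via $\max$ and $\min$, one must rigorously verify that the PDE for $\psi$ extends across the level set $\{u=u^{\eta}\}$. The key observation is that one does \emph{not} need to establish equality of Lagrange multipliers between $u$ and $v$; the common multipliers for $u$ and $u^{\eta}$ provided by Lemma \ref{lagrange} are what matter. The global smoothness of $v$, guaranteed by elliptic regularity for its own EL equation, together with the continuity of $c$, then suffices to extend the equation to the potentially irregular set $\{u=u^{\eta}\}$ by continuity of both sides.
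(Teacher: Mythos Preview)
Your argument is correct, and it takes a genuinely different route from the paper's proof. The paper works with $w:=u^{\eta}-T^{\eta}u$ rather than your $\psi=T^{\eta}u-u$: it first establishes equality of the Lagrange multipliers for $u$ and $T^{\eta}u$ (using the standing assumption \eqref{supp} to separate the two multipliers), so that $w$ satisfies the linear equation $-\Delta w+c\,w=0$ \emph{directly}, with no piecewise analysis; since $w$ vanishes on the nonempty open set $\{u<u^{\eta}\}\cap(\tphil'\times(\eta,\eta+\ell))$, the Weak Unique Continuation Principle (Theorem~\ref{ucp}) then forces $w\equiv 0$. By contrast, you avoid proving $\lambda(u)=\lambda(T^{\eta}u)$ altogether: the Euler--Lagrange equation for $T^{\eta}u$ enters only to supply $C^{2}$ regularity, and the linear equation for $\psi$ is obtained from that for $u^{\eta}-u$ via your piecewise--plus--continuity argument (which is sound, since $-\Delta\psi+c\psi$ is continuous and vanishes on the dense open set $\{u\neq u^{\eta}\}\cup\mathrm{int}\{u=u^{\eta}\}$); you then conclude via the Strong Maximum Principle on the half-slab. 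The paper's route is slightly cleaner once the multiplier equality is in hand, while yours trades that computation for a continuity argument and replaces unique continuation by the more elementary strong maximum principle---in particular, your version of this step does not explicitly invoke \eqref{supp}.
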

\begin{proof}
By Lemma \ref{proposition}
for any $\eta\in[-\ell,\ell]$ the function $T^{\eta} u$ is also a minimizer of $\E$ and hence satisfies \eqref{el}--\eqref{elf} with
 Lagrange parameters $\lambda_{\phi}(T^{\eta} u),\, \lambda_{\omega}(T^{\eta} u)$.

We will assume that $u\not\equiv T^{\eta}u$ and show that then $u^{\eta}=T^{\eta}u$. According to our assumption, the open set
\begin{eqnarray*}
U:=\left\{x\in \tphil'\times(\eta,\eta+\ell)\::\:u(x)<u(x^{\eta})\right\}
\end{eqnarray*}
is nonempty.

To begin, we will show that the Lagrange parameters of $u$ and $T^{\eta}u$ are equal. Note that the support of $\zeta'(u)$ is empty if and only if the support of $\zeta'(T^{\eta}u)$ is. In this case, the term with $\lambda_\omega$ in the Euler-Lagrange equation vanishes. Hence we may assume without loss that the support of $\zeta'(u)$ is nonempty.
By definition of $T^{\eta}u$ we have $u^{\eta}=T^{\eta}u$ in $U$ and $u=T^{\eta}u$ in $\tphil\setminus U$, so that
\begin{align*}
\lambda_{\phi}(u^{\eta})-\lambda_{\phi}(T^{\eta}u)+\big(\lambda_{\omega}(u^{\eta})-\lambda_{\omega}
(T^{\eta}u)\big)\zeta'(T^{\eta}u)&=0\qquad x\in U,\\
\lambda_{\phi}(u)-\lambda_{\phi}(T^{\eta}u)+\big(\lambda_{\omega}(u)-\lambda_{\omega}
(T^{\eta}u)\big)\zeta'(T^{\eta}u)&=0\qquad x\in \tphil\setminus U.
\end{align*}
Applying Lemma \ref{lagrange}, we obtain
\begin{eqnarray}
\lambda_{\phi}(u)-\lambda_{\phi}(T^{\eta}u)+\left(\lambda_{\omega}(u)-\lambda_{\omega}(T^{\eta}u)\right)\zeta'(T^{\eta}u)=0\qquad x\in \tphil.\label{l5.1}
\end{eqnarray}
According to \eqref{supp} there exist points outside the support of $\zeta'(T^{\eta}u)$, and at such points \eqref{l5.1} implies $\lambda_{\phi}(u)=\lambda_{\phi}(T^{\eta}u)$. But then \eqref{l5.1} at points within the support of $\zeta'(T^{\eta}u)$ yields $\lambda_{\omega}(u)=\lambda_{\omega}(T^{\eta}u)$. Combining this with Lemma \ref{lagrange} implies equality of all the Lagrange parameters:
\begin{align*}
\lambda_{\phi}(u)=\lambda_{\phi}(u^\eta)=\lambda_{\phi}(T^{\eta}u)\quad\text{and}\quad
  \lambda_{\omega}(u)=\lambda_{\omega}(u^\eta)=\lambda_{\omega}(T^{\eta}u).
\end{align*}

We now observe that
\begin{eqnarray*}
w:= u^{\eta}-T^{\eta}u
\end{eqnarray*}
satisfies
\begin{eqnarray*}
-\Delta w+c(x)w=0\qquad\hbox{in}\quad \tphil,
\end{eqnarray*}
where
\begin{eqnarray*}
c(x)=\int\limits_{0}^{1}f'\left(t u^{\eta}+(1-t)T^{\eta}u\right)\:dt.
\end{eqnarray*}
Since $w=0$ in $U$, the Weak Unique Continuation Principle (cf.\ Theorem~\ref{ucp}) implies $u^{\eta}=T^{\eta}u$ in $\tphil$.
\end{proof}
\subsection{Proof of Theorem~\ref{t:twopointsymm}}\label{SYM}
Using the background from the previous subsections, the proof of Theorem~\ref{t:twopointsymm} is straightforward.
\begin{proof}[Proof of Theorem~\ref{t:twopointsymm}]
We may without loss of generality assume that for
some $x'\in \tphil'$, there holds
$\partial_{y}u(x',y)\not\equiv 0$.

We recall from Proposition \ref{either} that for each $\eta\in[-\ell,\ell]$ there holds
\begin{eqnarray*}
(i)\quad u=T^{\eta}u\quad\hbox{in}\quad\tphil\qquad\hbox{or}\qquad (ii)\quad u^{\eta}=T^{\eta}u\quad\hbox{in}\quad\tphil.
\end{eqnarray*}

\medskip

\uline{Step 1.} We begin by showing that neither (i) nor (ii) can hold for all $\eta\in[-\ell,\ell]$. Assume without loss of generality that (i) holds for all $\eta\in[-\ell,\ell]$. W.l.o.g.\ consider $\eta\in[0,\ell]$. From the definition of $T^{\eta}u$ we obtain
\begin{eqnarray}\label{mu1}
\label{mu11} u(x)&\geq& u^{\eta}(x)\quad\hbox{for all}\; x\in \tphil'\times[\eta,\eta+\ell],\\
\label{mu12} u(x)&\leq& u^{\eta}(x)\quad\hbox{for all}\; x\in \tphil'\times[\eta-\ell,\eta].
\end{eqnarray}
On the other hand $\eta-\ell\in[-\ell,0]$ and hence \eqref{mu11} and \eqref{mu12} hold for parameter value $\eta-\ell$. Since $u^{\eta}=u^{\eta-\ell}$, we have
\begin{eqnarray*}
u(x)&\geq& u^{\eta}(x)\quad\hbox{for all}\; x\in \tphil'\times[\eta-\ell,\eta],\\
u(x)&\leq& u^{\eta}(x)\quad\hbox{for all}\; x\in \tphil'\times[\eta-2\ell,\eta-\ell].
\end{eqnarray*}
By the $2\ell$-periodicity of $u$ in the $y$-variable, this is equivalent to
\begin{eqnarray}
\label{mu21} u(x)&\geq& u^{\eta}(x)\quad\hbox{for all}\; x\in \tphil'\times[\eta-\ell,\eta],\\
\label{mu22} u(x)&\leq& u^{\eta}(x)\quad\hbox{for all}\; x\in \tphil'\times[\eta,\eta+\ell].
\end{eqnarray}
A comparison of \eqref{mu11} with \eqref{mu22} and \eqref{mu12} with \eqref{mu21} gives $u=u^{\eta}$. Together with the analogous argument for  $\eta\in[-\ell,0]$, this yields $u=u^{\eta}$ for all $\eta\in[-\ell,\ell]$ and
hence $u$ does not depend on $y$. Since this contradicts $\partial_{y}u(x',y)\not\equiv 0$, case (i) cannot occur for all $\eta\in[-\ell,\ell]$.

\medskip

\uline{Step 2.} We now observe that, because of the continuity of $u$, (i) and (ii) are preserved under limits. In other words, if (i) holds for some sequence  $(\eta_k)_k$ with $\eta_k\to\eta^{\star}$ as $k\to\infty$, then $u=T^{\eta^{\star}}$ and the same holds true for condition (ii).

\medskip

\uline{Step 3.} Let
\begin{align*}
  M_i:=\{\eta\in [-\ell,\ell]\colon \text{(i) holds}\} \quad\text{and}\quad M_{ii}:=\{\eta\in [-\ell,\ell]\colon \text{(ii) holds}\}.
\end{align*}
As a consequence of Steps 1 and 2, we obtain that both $M_i$ and $M_{ii}$ are infinite. In this step we will show that there is a value $\eta^\star$ that is an accumulation point of $M_i$ and $M_{ii}$ and moreover that there exists a strictly increasing sequence $(\eta_k)_k$ with
\begin{align*}
  \eta_k\in M_i\quad\text{and}\quad \eta_k\to\eta^{\star}\;\text{for}\;k\to\infty.
\end{align*}
According to Step 1 and Proposition \ref{either}, there exist points $\eta_i$ such that (ii) does not hold and (i) does and $\eta_{ii}$ such that (i) does not hold and (ii) does. By periodicity we may assume that $\eta_i<\eta_{ii}$. According to Step 2, $\eta_i$ is not an accumulation point of $M_{ii}$ and hence (since $M_i\cup M_{ii}=[-\ell,\ell]$) is an accumulation point of $M_i$. Let
\begin{align*}
  \eta^\star:=\sup\{\text{accumulation points of }M_i\in[\eta_i,\eta_{ii}]\}.
\end{align*}
According to Step 2, $\eta^\star<\eta_{ii}$. Consequently we deduce that $\eta^\star$ is an accumulation point of $M_{ii}$, since otherwise its definition as supremum is contradicted. By construction, $\eta^\star$ can be reached as the limit of an increasing sequence of points $\eta_k\in M_i$.

According to Step 2, $\eta^\star\in M_i\cap M_{ii}$ and hence $u=u^{\eta^{\star}}$ in $\tphil$.

\medskip

\uline{Step 4.} We now address the monotonicity.
From $u=T^{\eta_k}u$, we have
\begin{eqnarray*}
u(x)&\geq& u(x^{\eta_k})\qquad\hbox{in}\quad\tphil'\times[\eta_k,\eta_k+\ell]\\
&=&
u(x',2\eta_k-y)\\
&=&
u(x',2\eta^{\star}-2\eta_k+y)\qquad\hbox{since}\:u=T^{\eta^{\star}}u.
\end{eqnarray*}
Thus
\begin{eqnarray*}
\frac{u(x',y+2(\eta^{\star}-\eta_k))-u(x',y)}{2(\eta^{\star}-\eta_k)}\leq 0.
\end{eqnarray*}
In the limit $k\to\infty$ this gives
\begin{eqnarray*}
\partial_{y}u(x',y)\leq 0\qquad\hbox{in}\quad\tphil'\times[\eta^{\star},\eta^{\star}+\ell].
\end{eqnarray*}
By the Strong Maximum Principle (Theorem~\ref{t:strongmax}), this implies
\begin{eqnarray*}
\partial_{y}u(x',y)< 0\qquad\hbox{in}\quad\tphil'\times(\eta^{\star},\eta^{\star}+\ell).
\end{eqnarray*}
\end{proof}

\begin{bibdiv}
\begin{biblist}
\bib{AFP}{book}{
   author={Ambrosio, Luigi},
   author={Fusco, Nicola},
   author={Pallara, Diego},
   title={Functions of bounded variation and free discontinuity problems},
   series={Oxford Mathematical Monographs},
   publisher={The Clarendon Press, Oxford University Press, New York},
   date={2000},}
\bib{Bae}{article}{
   author={Baernstein, Albert, II},
   title={A unified approach to symmetrization},
   conference={
      title={Partial differential equations of elliptic type},
      address={Cortona},
      date={1992},},
   book={
      series={Sympos. Math., XXXV},
      publisher={Cambridge Univ. Press, Cambridge},},
   date={1994},
   pages={47--91},
   review={\MR{1297773}},}
\bib{BL}{article}{
   author={Berestycki, H.},
   author={Lions, P.-L.},
   title={Nonlinear scalar field equations. I. Existence of a ground state},
   journal={Arch. Rational Mech. Anal.},
   volume={82},
   date={1983},
   number={4},
   pages={313--345},}
\bib{B}{article}{
   author={Brock, Friedemann},
   title={Rearrangements and applications to symmetry problems in PDE},
   series={Handbook of differential equations: stationary partial differential equations},
   volume={IV},
   date={2007},
   number={},
   pages={1--60},}
 \bib{Br}{article}{
    AUTHOR = {Brock, Friedemann},
     TITLE = {Symmetry and monotonicity of solutions to some variational
              problems in cylinders and annuli},
   JOURNAL = {Electron. J. Differential Equations},
      YEAR = {2003},
     PAGES = {No. 108, 20},}
\bib{BS}{article}{    AUTHOR = {Friedemann Brock and Alexander Yu Solynin.},
     TITLE = {An approach to symmetrization via polarization},
   JOURNAL = {Trans. Amer. Math. Soc.},
    VOLUME = {352},
      YEAR = {2000},
    NUMBER = {4},
     PAGES = {1759--1796},}
\bib{BZ}{article}{
   author={Brother, J.},
   author={Ziemer, W.},
   title={Minimal rearrangements of Sobolev functions},
   journal={J. Reine Angew. Math.},
   volume={384},
   date={1988},
   pages={153--179},}
\bib{CF}{article}{
   author={Cianchi, Andrea},
   author={Fusco, Nicola},
   title={Steiner symmetric extremals in P\'olya-Szeg\"o type inequalities},
   journal={Adv. Math.},
   volume={203},
   date={2006},
   number={2},
   pages={673--728},}
\bib{CF02}{article}{
   author={Cianchi, Andrea},
   author={Fusco, Nicola},
   title={Functions of bounded variation and rearrangements},
   journal={Arch. Ration. Mech. Anal. },
   volume={165},
   date={2002},
   number={1},
   pages={1--40},}

\bib{EG}{book}{
   author={Evans, Lawrence C.},
   author={Gariepy, Ronald F.},
   title={Measure theory and fine properties of functions},
   series={Textbooks in Mathematics},
   edition={Revised edition},
   publisher={CRC Press, Boca Raton, FL},
   date={2015},}
\bib{GWW}{article}{
   author={Gelantalis, Michael},
   author={Wagner, Alfred},
   author={Westdickenberg, Maria G.},
   title={Existence and properties of certain critical points of the
   Cahn-Hilliard energy},
   journal={Indiana Univ. Math. J.},
   volume={66},
   date={2017},
   number={5},
   pages={1827--1877},}
\bib{HL}{book}{
   author={Han, Qing},
   author={Lin, Fanghua},
   title={Elliptic partial differential equations},
   series={Courant Lecture Notes in Mathematics},
   volume={1},
   publisher={New York University, Courant Institute of Mathematical
   Sciences, New York; American Mathematical Society, Providence, RI},
   date={1997},
   pages={x+144},}
\bib{K}{book}{
   author={Kawohl, Bernhard},
   title={Rearrangements and convexity of level sets in PDE},
   series={Lecture Notes in Mathematics},
   volume={1150},
   publisher={Springer-Verlag, Berlin},
   date={1985},
   pages={iv+136},}
\bib{LN}{article}{
   author={Li, Yi},
   author={Ni, Wei-Ming},
   title={Radial symmetry of positive solutions of nonlinear elliptic
   equations in ${\bf R}^n$},
   journal={Comm. Partial Differential Equations},
   volume={18},
   date={1993},
   number={5-6},
   pages={1043--1054},}
\bib{Ta}{article}{
    AUTHOR = {Talenti, Giorgio},
     TITLE = {Elliptic equations and rearrangements. },
   JOURNAL = {Ann. Scuola Norm. Sup. Pisa Cl. Sci. (4)},
    VOLUME = {3},
      YEAR = {1976},
    NUMBER = {4},
     PAGES = {697--718},}
\bib{Wo}{article}{
    AUTHOR = {Wolff, Thomas H.},
     TITLE = {Recent work on sharp estimates in second-order elliptic unique
              continuation problems},
   JOURNAL = {J. Geom. Anal.},
    VOLUME = {3},
      YEAR = {1993},
    NUMBER = {6},
     PAGES = {621--650},}

\end{biblist}
\end{bibdiv}
\end{document}